\theoremstyle{plain}
\newtheorem{theorem}{Theorem}[section]
\newtheorem{lemma}[theorem]{Lemma}
\newtheorem{corollary}[theorem]{Corollary}
\newtheorem{remark}[theorem]{Remark}
\newtheorem{proposition}[theorem]{Proposition}
\newtheorem{example}[theorem]{Example}
\newcommand{\R}{\mathbb{R}} 
\newcommand{\E}{\mathbb{E}}
\newcommand{\N}{\mathbb{N}}
\newcommand{\EE}{\mathbb{E}}
\newcommand{\PP}{\mathbb{P}}
\newcommand{\T}{\mathcal{T}}
\numberwithin{equation}{section}  
\begin{document}

\title{Stein's Method for Distributions Modelling Competing and Complementary Risk Problems} 

\author{Anum Fatima\\ \institute{University of Oxford, UK \& Lahore College for Women University, Pakistan}\\
\email{fatima@stats.ox.ac.uk} \And  
Gesine Reinert \\
\institute{University of Oxford \& The Alan Turing Institute, London, UK}\\ \email{reinert@stats.ox.ac.uk}}

\maketitle

\begin{abstract}
Competing and Complementary risk (CCR) problems are often modelled using a class of distributions of the maximum, or minimum, of a random number of i.i.d.\,random variables; we call this class the CCR class of distributions. While the CCR distributions generally do not have an easy-to-calculate density or probability mass function, two special cases, namely the Poisson-exponential and the exponential geometric distributions, can easily be calculated. Hence, it is of interest to approximate CCR distributions with these simpler distributions. In this paper, we develop Stein's method for the CCR class of distributions to provide a general comparison approach to bound the distance between two CCR distributions and contrast this approach to bounds obtained using a Lindeberg argument. We detail the comparison for Poisson-exponential, and exponential-geometric distributions.
\end{abstract}

Keywords: {Stein's Method, competing and complementary risk, Poisson-exponential distribution, Poisson-geometric distribution, exponential-geometric distribution, distributional distance}

AMS 2020 Subject Classification: {60F05}, {60E05}

\section{Introduction} \label{intro}
\pagenumbering{arabic}

Competing and complementary risk (CCR) problems typically focus on the failure of a system composed of multiple components, or a system with several, sometimes even a countably infinite number of risk factors that can cause its failure. Here we think of components and risk factors as having a random lifetime, denoted by a sequence of i.i.d.\,positive \,random variables $X_1, X_2, \ldots$; at the end of its lifetime the component fails, or the risk occurs. 

CCR systems are either sequential or parallel; in sequential systems, the whole system fails at the occurrence of the first among an unknown number $N$ of risk factors; risks compete with each other to cause failure. In this case, the observed failure time is the minimum of the lifetimes of the risk factors. In parallel systems, the system fails after all of an unknown number $N$ of risk factors occur, and the observed failure time is the maximum of lifetimes of these risk factors. In both setups, the lifetimes of the components or risks are modelled random, as is the number of risks $N$, which is assumed to be independent of the lifetimes. These two CCR settings are frequently studied together as $\max(X_1, \ldots, X_N) = - \min(-X_1, \ldots, -X_N)$; see \cite{basu_klein}.

CCR settings arise in many fields such as industrial reliability, demography, biomedical studies, public health, and actuarial sciences. For example, in a study of vertical transmission of HIV from an affected mother to the newborn child, several factors increase the risk of transmission, such as high maternal virus load and low birth weight. Exactly which factors determine the timing of transmission is an ongoing area of research. Even with a list of known risk factors, there are possibly many unknown risk factors involved. Hence  the problem is modelled as a CCR problem in \cite{tojeiro2014}, \cite{louzada2016}, \cite{cancho2011poisson} and \cite{louzada2012}. The number of successive failures of the air conditioning system of each member of a fleet of 13 Boeing 720 jet aeroplanes has been modelled as a CCR problem in \cite{KUS20074497}, with potential risk factors including defective components in the air conditioning system or errors committed during the production process. In \cite{ADAMIDIS199835} the period between successive coal-mining disasters is modelled as a CCR problem, with construction faults or human errors committed by inexperienced miners as examples of risks. The daily ozone concentrations in New York during May–September 1973 is treated as a CCR problem in \cite{jayakumar2021general}. 

We call the distributions used to model the lifetimes in CCR problems the {\it CCR family of distributions}. Examples from this family which are studied in the literature are the exponential-Poisson distribution \cite{KUS20074497}, the Poisson-exponential lifetime distribution \cite{cancho2011poisson}, the Weibull–Poisson distribution \cite{lu2012new}, the extended Weibull–Poisson distribution, and the extended generalised extreme value Poisson distribution \cite{ramos_dey2020}. \cite{tahir2016compounding} gives a detailed review of CCR distributions and proposes additional ones.

The probability mass function of a CCR distribution can be quite unwieldy. Hence, it is of interest to approximate a CCR distribution by a simpler CCR distribution such as the Poisson-exponential distribution or the exponential geometric distribution. For the latter distributions, the number of risks, or components, follows a zero-truncated Poisson or geometric distribution, respectively. To assess such approximations we develop Stein's method for CCR distributions. 

The seminal work of Charles Stein \cite{stein1986approximate} derives bounds on the approximation error for normal approximations. In \cite{chen1975poisson}, Stein's method is adapted to Poisson approximation, see also \cite{arratia1989two} and \cite{barbour1992poisson}. Generalisations to many other distributions and dimensions are available, see for example \cite{chen2011normal}, \cite{nourdin2012normal}, and \cite{mijoule2023stein}. The Stein operators in this paper are based on the density approach; see  \cite{ley2017stein}. A main difficulty in distributional comparison problems is the comparison of a discrete and a continuous distribution; related works on Stein's method include  \cite{goldstein2013stein}, which bounds the Wasserstein distance between a beta distribution and the distribution of the number of white balls in a P\'{o}lya-Eggenberger urn, and \cite{germain2023one}, where standardisations related to the ones we propose are used. Maxima of a fixed number of random variables have been treated with Stein's method in \cite{feidt2013stein}; Stein's method applied to a random sum of random variables can be found in \cite{pekoz_rollin_ross}. In this paper we propose a comparison of the distributions of a maximum (or minimum) of discrete and a maximum (or minimum) of continuous random variables when the number of these random variables is itself an independent random variable. In future work, the  Stein characterisations derived in our paper could be employed to construct Stein-based goodness of fit statistics as in \cite{betsch2019new}.

The remainder of this paper is organised as follows. Section \ref{sec:gen_Stein} gives a brief introduction to Stein's method using the density approach, and applies it to obtain a general representation for the CCR class of distributions through a Stein operator. Section \ref{sec:gen_comp} develops Stein's method for comparing CCR distributions; as an alternative approach, it also provides a comparison based on a Lindeberg argument. As a main illustration of our results Section \ref{sec:PE_dist} details Stein's method for the Poisson-exponential (PE) distribution and uses it to bound the total variation distance between a PE distribution and a distribution from the CCR family, and a distribution not from the CCR family. As a second illustration, in Section \ref{sec:EG} we develop Stein's method for the exponential-geometric distribution and give distributional comparisons in total variation distance. Section \ref{sec:patterns} gives a bound on the bounded Wasserstein distance between the distribution of maximum waiting time of sequence patterns in Bernoulli trials and a PE distribution. Proofs which are standard and would disturb the flow of the argument are postponed to Appendix \ref{sec:proof}.

\section{Stein's method for CCR distributions}  \label{sec:gen_Stein}

We use the notation $\R^+_{{>0}} = (0, \infty)$ and $\N = \{1, 2, 3, \ldots\}.$ The  backward  difference operator $\Delta^-$ operates on a function  $g: \R \rightarrow \R$ by $\Delta^-g(x) = g(x) - g(x-1)$. We note that $\Delta^- (gq)(y) = q(y)\Delta^- g(y) + g(y-1) \Delta^- q(y)$. For a function $h \in {\rm Lip}_b(1)$ its derivative is denoted by $h'$; it exists almost everywhere (by Rademacher's theorem).

For two probability distributions
$q$ and $p$ on  $\R^+_{{>0}}$, we seek bounds on distances of the form
\begin{equation} \label{distnce}
    d_{\mathcal{H}} (p,q) := \underset{h \in \mathcal{H}}{\sup} \, |\E h(X) - \E h(Z)|
\end{equation}
where $\mathcal{H}$ is a set of test functions,  $Z\sim q$ and $X \sim p$. The sets of functions $\mathcal{H}$ in \eqref{distnce} are, for the total variation distance ($d_{TV}$), $\mathcal{H} = \{\mathbb{I}[.\in A] : A \in \mathcal{B} ( \mathbb{R})\}$; and for the  bounded Wasserstein distance ($d_{BW}$), $$\mathcal{H} = {\rm Lip}_b(1):=\{h: \R^+_{{>0}} \rightarrow \mathbb{R}:  |h(x) - h(y)| \le |x-y| \; \mbox{ for all } \;  x,y\in\R; \|h\| \le 1 \}.$$ Here $\mathcal{B} ( \mathbb{R})$  denotes the Borel sets of $\mathbb{R}$, and $\| \cdot \|$ is the supremum norm in $\mathbb{R}$. We note here the alternative formulation for total variation distance based on Borel-measurable functions $h: \R \rightarrow \R$;  
\begin{eqnarray} \label{eq:dtv}
d_{TV}(p,q) = \frac{1}{2} \underset{\|h\| \le 1}{\sup}|\E h(X) - \E h(Z)|.
\end{eqnarray}

\subsection{Stein's method for distributional comparisons}

To obtain explicit bounds on the distance between a probability distribution $\mathcal{L}(X)$ of interest and a usually well-understood approximating distribution $\mathcal{L}_0(Z)$, often called the {\it target} distribution,  Stein's method connects the test function $h \in \mathcal{H}$ as in \eqref{distnce} to the distribution of interest through a  \textit{Stein Equation}
\begin{equation}\label{SE}
    h(x) - \E h(Z) = \mathcal{T}g(x).
\end{equation}
In \eqref{SE}, $\mathcal{T}$ is a \textit{Stein operator} for the distribution $\mathcal{L}_0 (Z)$, with an associated \textit{Stein class} $ \mathcal{F}(\mathcal{T})$ of functions such that $\E[\mathcal{T}g(Z)] = 0 \mbox{ for all } g \in \mathcal{F}(\mathcal{T}) $ if and only if $ Z \sim \mathcal{L}_0 (Z);$ thus; a Stein operator characterises the distribution. The distance \eqref{distnce} can then be bounded by $ d_{\mathcal{H}} (\mathcal{L}(X),\mathcal{L}_0(Z)) \le \underset{g \in \mathcal{F}(\mathcal{H})}{\sup}|\E\mathcal{T}g(X)| $ where $ \mathcal{F}(\mathcal{H}) = \{g_h | h \in \mathcal{H}\}$ is the set of solutions of the Stein equation \eqref{SE} for the set of test functions $h \in \mathcal{H}$. 

The Stein operator $\mathcal{T}$ for a probability distribution is not unique, see for example \cite{ley2017stein}. In this paper, we employ the so-called {\it density method} which uses the score function of a probability distribution to construct a Stein operator, called a {\it score Stein operator}. Following \cite{stein2004use, ley2013stein}, a score Stein operator for a continuous distribution with probability density function (pdf) $p$ and support $[a,b] \in \R$ acts on functions $g$, such that the derivative exists, as 
\begin{equation} \label{generalstein}
{\mathcal T}_p g (x)  = \frac{ (gp)' (x) }{p (x)};
\end{equation}
 here, $0/0 =0$. For differentiable $g$ and $p$, \eqref{generalstein} simplifies to ${\mathcal T}_p g (x) = g'(x) + g(x) \rho (x)$ where $\rho = p'/p$ is the score function of $p$. The {Stein} class ${\mathcal F}(\mathcal{T}_p)$ is the collection of functions $ g : \R \rightarrow \R$ such that $g(x) p(x)$ is differentiable with integrable derivative and $ \lim_{x \to {a,b} } g(x)p(x)=0 $. It is straightforward to see that for $p$, a pdf on ${[a,b] \subset \R}$,
\begin{align} \label{solution}
    g(x) = g_h(x) = \frac{ 1 }{p (x)} \int_0^x [h(t)-\E  h(X)] p(t) \,\mathrm{d}t
    \end{align}
solves \eqref{SE} for $h$, and that,  if $h$ is bounded, then $g \in \mathcal{F}(\mathcal{T}_p)$.

To use the Stein equation for a distributional comparison, let $X$ and $Y$ be two random variables with pdf $p_X$ and  $p_Y$, defined on the same probability space and with nested supports ${\rm{supp}}(p_Y) \subset {\rm{supp}}(p_X) =[a,b] \subset \R$, score functions $\rho_X$ and $\rho_Y$ and the corresponding score Stein operators $\mathcal T_X$ and $\mathcal T_Y$. Then
\begin{equation} \label{comparison}
    \E h(Y)-\E h(X) = \E g_{X, h}(Y)(\rho_Y(Y)-\rho_X(Y))
\end{equation}
where $g_{X, h}(x)$ is the solution of the Stein equation for $h$ and $\mathcal T_X$. Equation \eqref{comparison} is a special case of Equation 23 given in Section 2.5 of \cite{ley2017stein}.

For a discrete distribution with probability mass function (pmf) $q$ having  support $\mathcal{I}=[a,b] \subset {\mathbb{N}}$, the discrete backwards score function is $\frac{ \Delta^- q(y)}{q(y)}$, and, as in Remark 3.2 and Example 3.13 in \cite{ley2017stein}, a discrete  backward score Stein operator is 
\begin{equation}
{\mathcal T}_q g (y)  = 
 \Delta^- g(y) + \frac{ \Delta^- q(y)}{q(y)} g(y-1),  \quad   y \in \mathcal{I}. 
\end{equation}

In abuse of notation, we often refer to a Stein operator for the distribution of $X$ as the Stein operator for $X$, and similarly to the score function of the distribution of $X$ as the score function of $X$.  Further, if $X \sim p$ we also write $\mathcal{T}_X$ for $\mathcal{T}_p$.

\subsection{CCR distributions}

Let  $N \in \N$ be a random variable with finite second moment and let ${\bf{Y}}= (Y_1, Y_2, \ldots)$  be a sequence of positive i.i.d.\,random variables, with cdf $F_Y$, independent of $N$. Then 
\begin{equation} \label{eq:CCR_rv}
    W_{\alpha} =W_{\alpha}(N,  {\bf{Y}})=  \begin{cases}
        \min\{Y_1, Y_2, \ldots, Y_N\}, & \mbox{ if } \alpha = -1; \\
        \max\{Y_1, Y_2, \ldots, Y_N\}, & \mbox{ if }\alpha = 1,
    \end{cases}
\end{equation}
is called a CCR random variable with type indicator $\alpha \in \{-1, 1\}.$
Setting
\begin{equation} \label{U_minmax}
    U_Y^{\alpha}(\cdot) = \begin{cases}
    1- F_Y(\cdot), & \mbox{ if }\alpha = -1; \\
    F_Y(\cdot), & \mbox{ if }\alpha = 1,
\end{cases}
\end{equation}
and denoting $G_N(x) = \E\,  x^N$ the probability generating function of $N$,
$W_{\alpha} >0$ has cumulative distribution function (cdf)  
\begin{align} \label{CCR_class_cdf}
    F_{W_{\alpha}}(w) &= \PP (W_\alpha  \le w)\\
    &= \begin{cases}
    1-\sum_{n}\PP (N =n)U_Y^{\alpha}(w)^{n} = 1-(G_N\circ U_Y^{\alpha})(w), & \mbox{ if }
    \alpha = -1; \\
    \sum_{n}\PP (N =n)U_Y^{\alpha}(w)^{n} = (G_N\circ U_Y^{\alpha})(w), & \mbox{ if }
    \alpha = 1.
    \end{cases}
\end{align}
If the $Y_i$'s have a continuous distribution with pdf $f_Y$ then $W_\alpha$ has pdf
\begin{equation} \label{CCR_class}
    {f}_{W_{\alpha}}(w) = \sum_{n} \PP (N =n) n ({U_Y^{\alpha}}(w))^{n-1} {f}_Y(w) = f_Y(w)(G_N'\circ U_Y^{\alpha})(w), 
\end{equation}
see also \cite{tahir2016compounding}: here we used that $(U_Y^\alpha)'(w) = \alpha f_Y(w)$.
  
If the $Y_i$'s are discrete  with pmf $p_Y$ on $\N,$ the resulting random variable $W_{\alpha}$ has pmf $p_{W_{\alpha}}$ which can be expressed in terms of $G_N(\cdot)$ as 
\begin{equation} \label{CCR_pmf}
    p_{W_{\alpha}}(x) =  \alpha \left(G_N (U_Y^{\alpha}(x))  - G_N (U_Y^{\alpha}(x-1)) \right) = \alpha\Delta^- (G_N \circ U_Y^{\alpha}) (x).
\end{equation}

\begin{example}
\begin{enumerate} 
    \item If $N$ is a zero-truncated Poisson random variable with parameter $\theta$, then \eqref{CCR_class} simplifies to 
   $ f_{W_{\alpha}}(w) =  f_Y(w) \frac{\theta }{(1-e^{-\theta})} e^{-\theta(1-U_Y^{\alpha}(w))}, $ for $w > 0.$ This is the pdf of the extended Poisson family of distributions given in Equation 3 of \cite{ramos_dey2020}, where it is called the G-Poisson class of distributions \cite{tahir2016compounding}. 
    \item If $N$ is a Geometric($p$)  random variable, with pmf $\PP(N=n) = (1-p)^{(n-1)} p$ for $n \in \N  $, then \eqref{CCR_class} yields 
    $f_{W_{\alpha}}(w) =  f_Y(w) \frac{p}{(1-(1-p)U_Y^{\alpha}(w))^2},$ for $ w > 0,$ 
    which gives the distributions in equations 5.2 and 5.3 of \cite{marshallolkin}.
\end{enumerate}
\end{example} 

\subsection{Stein's method for the CCR class of distributions}

To obtain a Stein operator for the CCR random variable $W_{\alpha} = W_{\alpha}(N, {\bf Y})$  in \eqref{eq:CCR_rv}, we use the density method. First, we assume that the $Y_i$'s are continuous with differentiable pdf $f_Y$. From \eqref{CCR_class} the score function for the distribution of $W_{{\alpha}}$ is 
\begin{equation} \label{generalscore} 
   \rho_{W_{{\alpha}}}({w}) = {\alpha} {f}_Y({w}) \frac{(G_N'' \circ U_Y^{\alpha})(w)}{(G_N'\circ U_Y^{\alpha})({w})} + \rho_Y(w), 
\end{equation} 
with $\rho_Y = {f_Y'}/{f_Y}$ the score function of $Y$. Hence  $\mathcal{T}_{W_{\alpha}}$ given by
\begin{equation} \label{CCR_stein_op}
    \mathcal{T}_{W_{\alpha}}g(w) = g'(w)  + \rho_{W_{\alpha}}(w) g(w),
\end{equation}
for $g$ differentiable, is a Stein operator acting on the functions $g \in {\mathcal F}(\mathcal{T}_{W_{\alpha}})$. For  a test function $h \in \mathcal{H}$ the corresponding Stein equation is  
\begin{equation}\label{eq:steingen}
    g'({w})  + \rho_{{W_{{\alpha}}}}({w}) g({w}) = h({w}) - \E h ( {W_{\alpha}}).
\end{equation} 

Thus, for any random variable $X$, the distance $d_{\mathcal{H}}$ from \eqref{distnce}  between the distribution of $X$ and $W_{\alpha}$ can be bounded by bounding the expectation of the left-hand side of \eqref{eq:steingen}. For $Y_i$'s taking values in $\N$ with pmf $p_Y$, the {backward score function} is
\begin{equation} \label{eq:genscorediscrete}
    \rho_{W_{{\alpha}}}(w) = \frac{\Delta^- p_{W_{\alpha}}(w)}{p_{W_{\alpha}}(w)} = \frac{\Delta^-( \Delta^- (G_N \circ U_Y^{\alpha})  )(w) }{\Delta^- (G_N \circ U_Y^{\alpha})(w)},
\end{equation}
with corresponding discrete backward score Stein operator $\mathcal{T}_{W_{\alpha}}$ operating as
\begin{align} \label{CCR_stein_op_discrete}
\mathcal{T}_{W_{\alpha}} g (w)  = \Delta^- g(w) + \rho_{W_{\alpha}}(w) g(w-1).
\end{align}

\section{A general comparison approach} \label{sec:gen_comp}
 
To illustrate the use of Stein's method for CCR distributions we compare the distributions of two maxima or two minima of a random number of i.i.d.\,random variables, $W_{{\alpha}}(N, {\bf Y})$ and $W_{{\alpha}}(M, {\bf Z})$.

\begin{proposition} \label{CCR_comp}
Let $W_{\alpha_1}(N, {\bf Y})$ and $W_{\alpha_2}(M, {\bf Z})$, for $\alpha_1, \alpha_2 \in \{-1, 1\}$ be CCR random variables 
with pdf's $f_Y$ and $f_Z$ and score functions $\rho_Y$ and $\rho_Z$. Then for any test function $h$ such that  the  $W_{\alpha_1}(N, {\bf Y})$-Stein equation \eqref{eq:steingen} for $h$ has a solution $g=g_h$, 
\begin{eqnarray}
 \lefteqn{\left|\E h(W_{\alpha_1}(N, {\bf Y}))-\E h(W_{\alpha_2}(M, {\bf Z}))\right| = \Big| \EE g ( W) 
  \left(\alpha_1 f_Y(W) \frac{(G_N'' \circ U_Y^{\alpha_1})(W)}{(G_N' \circ U_Y^{\alpha_1})(W)} \right. } \nonumber 
 \\
 && \left.  \quad \quad \quad \quad \quad \quad  - \alpha_2 f_Z(W) \frac{(G_M'' \circ U_Z^{\alpha_2})(W)}{(G_M' \circ U_Z^{\alpha_2})(W)} \right)  
 + \EE g (W) ( \rho_Y - \rho_Z) (W) \Big|, \quad \quad \quad \quad
 \label{eq:gencomp}
\end{eqnarray}
where $W = W_{\alpha_{2}}(M, {\bf Z})$ and $U_{\cdot}^{\alpha}$ is as  in \eqref{U_minmax}.
 
When the $Y_i$'s are discrete with pmf $f_Y$ and the $Z_i's$ are discrete with pmf $f_Z$ then
\begin{eqnarray}
 \left|\E h(W_{\alpha_1}(N, {\bf Y}))-\E h(W_{\alpha_2}(M, {\bf Z}))\right|&= \, \, \left|  \EE g (W-1) 
 \left(f_Y(W) \frac{\Delta^-( \Delta^- (G_N \circ U_Y^{\alpha_1})  )(W) }{\Delta^- (G_N \circ U_Y^{\alpha_1})(W) } \right. \right. \nonumber \\
  & \left. \left.  -  f_Z(W) \frac{\Delta^-( \Delta^- (G_N \circ U_Z^{\alpha_2})  )(W) }{\Delta^- (G_N \circ U_Z^{\alpha_2})(W) } \right) \right|.
 \label{eq:gencomp_disc}
\end{eqnarray}
\end{proposition}
\begin{proof}
We substitute the score functions \eqref{generalscore} and \eqref{eq:genscorediscrete} in \eqref{comparison}; simplifying gives \eqref{eq:gencomp}, and \eqref{eq:gencomp_disc} respectively.
\end{proof}

For comparing a discrete and a continuous random variable we use the concept of {\it standardised Stein equations} as in \cite{ley2017stein} and \cite{germain2023one}. For a continuous random variable $W$ with score function $\rho_W$, and a differentiable  function $c: \R \rightarrow \R$, we define a $c-$standardised Stein operator ${\mathcal{T}_{W}^{(c)}}$ by
\begin{equation} \label{stand_stein_cont}
   \mathcal{T}_W^{(c)} (g(w))  = {\mathcal{T}_W}(cg)(w) = c(w) g'(w) + \left[ c(w) \rho_W (w) + c'(w) \right] g(w).
\end{equation} 
For a random variable $V \in \N$ with discrete backward score  function $\rho_V$ and a function $d: \N \rightarrow \R$ we define a $d-$standardised Stein operator for $V$ by 
\begin{eqnarray} \label{eq:discretestin}
     \lefteqn{{\mathcal{T}_V^{(d)}} (g(w)) = \mathcal{T}_{V}(dg)({w}) = 
    \Delta^- (dg) (w) + \rho_V(w) (dg)(w-1)} \\ \nonumber
    && \quad \quad = d(w-1)\Delta^-g(w) + g(w)\Delta^- d(w) +  d(w-1) g(w-1)\rho_V (w) .
\end{eqnarray}

For CCR random variables $W_{\alpha}(N, {\bf{Y}})$ and $W_{\alpha}(M, {\bf{Z}})$, when the $Y_i$'s are continuous on $\R^+_{{>0}} $ with differentiable pdf $f_y$ and the $Z_i$'s take values in $\N$, then we rescale $W_{\alpha}(M, {\bf Z})$ by dividing it by $n$,  to give $W_n = W_{\alpha, n} = \frac1n W_{\alpha}(M, {\bf Z})$.
If ${W_{\alpha}(M, {\bf Z})} \in \N$ has pmf $p$ and backward score function $\rho$, $W_n$ has pmf $ \PP (W_n = z) = p (nz)$ and backward score function  $\Tilde{\rho}_n(z) = \rho (nz)$. We note here that the ratio $\rho(nz) = \frac{ p(nz) - p(nz-1)}{p(nz)} $ is the score function of ${W_{\alpha}(M, {\bf Z})}$ evaluated at $nz$, which for $n \ne 1$ does not equal the score function of $W_n$. With $ \Delta^{-n} f(x) := f(x) - f(x-1/n)$
we obtain the  Stein operator $ {\mathcal T}_{n}^{(d)}$ given by 
\begin{equation} \label{eq:discretesteinop}
   {\mathcal T}_{n}^{(d)} g(z) = {\mathcal T}_n (dg) (z)  =
    \Delta^{-n} (dg)(z)  d(z) +\tilde{\rho}_n(z) (dg) \left( z - 1/n \right).
\end{equation}

\begin{proposition} \label{prop:comp}
Let ${W_{\alpha}(M, {\bf Z})}$ be a discrete CCR random variable with discrete backward score function ${\rho_W}$; for $n \in \N$ set $W_n = {W_{\alpha}(M, {\bf Z})}/n$ and $\Tilde{\rho}_n(z) = \rho_W(nz)$. Let ${W = W_{\alpha}(N, {\bf Y})}$ be a continuous CCR random variable with score function $\rho$. Let $h \in \mathcal{H}$  be a test function such that the ${\mathcal{L}}({W})$-Stein equation \eqref{SE} has solution $g=g_h$. Then, for any differentiable function $c: \R^+_{{>0}} \rightarrow \R$, and  for any function $d: \N \rightarrow \R$,
\begin{eqnarray}
    \lefteqn{|\E h(W_n) - \E h(W)|
    \le \left|
    \E [ n \Delta^{-n} (dg)(W_n) -  (cg)'(W_n) ]\right.} \nonumber\\
    && \left. \quad \quad \quad \quad + \E\left[ n {\tilde \rho}_n (W_n) (dg)\left( W_n - 1/n \right) - (cg)\left( W_n\right) \rho(W_n)\right] \right|. \label{disbound1}
\end{eqnarray}
\end{proposition}
\begin{proof}
For comparing the two distributions, for a given test function $h$ we have 
\begin{align}
    \E h(W_n) - \E h(W) = \E {\mathcal T}_{W} (cg)(W_n) = \E [ (cg)'(W_n) + (cg)(W_n) \rho(W_n)] \label{eq:step1}
\end{align}
with $g$ solving the continuous Stein equation \eqref{stand_stein_cont} for $h$.
Next, we note that for the Stein operator given in \eqref{eq:discretesteinop},
$\E {\mathcal T}_{n}^{(d)} (g) ({W_n}) = 0$ by construction. Hence, also $n  \E {\mathcal T}_n^{(d)} (g) (W_n) = 0.$ 
Thus, \eqref{eq:step1} yields  
\begin{eqnarray*}
  \lefteqn{\E h(W_n) - \E h(W) = \E \left[ (cg)'(W_n) + (cg)(W_n) \rho(W_n) \right.}\\
  && \left. \quad \quad \quad \quad \quad \quad \quad \quad - n \Delta^{-n} (dg)(W_n) - n {\tilde \rho}_n (W_n) (dg)\left( W_n - 1/n \right) \right] . \nonumber 
\end{eqnarray*}
 Re-arranging gives the assertion.
\end{proof}

The adaptation to other deterministic scaling functions would be straightforward; as in our examples we only scale by dividing by $n$, we concentrate on this case. As an aside, while such standardisations and scalings could perhaps be used for a ``bespoke derivative'' as in \cite{germain2023one},  the connection is not obvious.

An alternative comparison between CCR distributions can be achieved using a Lindeberg argument, to arrive at the following result.

\begin{proposition} \label{prop:lindeberg}
Let $W_{\alpha}(M, \bf{X})$ and $W_{\alpha}(N, \bf{E})$ be given in \eqref{eq:CCR_rv}. Then for functions $h \in {\rm Lip}_b(1)$, 
\begin{eqnarray}
   \lefteqn{\left| \E h ( W_{\alpha}(N, {\bf{E}}))   - \E h ({W_{\alpha}(M, {\bf{X}})}) \right|
    \le  || h'||  \sum_{i=1}^\infty  \E | E_i - X_i| \PP (M \ge i)} \\
  &&  \quad \quad \quad +  {\sum_{m,n=1}^\infty \PP (M=m, N=n) \left|\E h(W_{\alpha}(n, {\bf{E}})) -
     \E h(W_{\alpha}(m, {\bf{E}}))\right|.} \nonumber  
\end{eqnarray}
If $M$ and $N$ are identically distributed random variables, then     \begin{equation}
     |\E h(W_{\alpha}(N, {\bf{E}})) - \E  h (W_{\alpha}(N, {\bf{X}})) | \le  \| h'\|\sum_{i=1}^\infty  \E | E_i - X_i| \PP (N \ge i) .
     \label{prop2inequality}
\end{equation} 
\end{proposition}

\begin{proof} 
We employ a Lindeberg argument, as follows.  Defining $X_1, X_2, \ldots$ and $E_1, E_2, \ldots$ on the same probability space,
we have 
\begin{align}
\left| \E h ( W_{\alpha}(N, {\bf{E}}))   - \E h ({W_{\alpha}(M, {\bf{X}})}) \right| 
    \le &
    \left| \E h ( W_{\alpha}(N, {\bf{E}}))   - \E h ({W_{\alpha}(M, {\bf{E}})}) \right| \label{disBW_1}\\
& + \left| \E h ( W_{\alpha}(M, {\bf{E}}))   - \E h ({W_{\alpha}(M, {\bf{X}})}) \right|\label{disBW_2}.
\end{align}
To bound \eqref{disBW_1}, we simply note that
\begin{eqnarray}
   \lefteqn{|\E h(W_{\alpha}(N, {\bf{E}})) -
     \E h(W_{\alpha}(M, {\bf{E}}))|}
     \nonumber \\
     &\le& || h'|| \sum_{m,n=1}^\infty \PP (M=m, N=n) \E \left| W_{\alpha}(n, {\bf{E}}) -
     W_{\alpha}(m, {\bf{E}})\right| \label{disBW_1_exp}
\end{eqnarray}
Now to bound \eqref{disBW_2}, if $\alpha =1$, then
\begin{align}
    &{\left| \E h\left(\max\{E_1, \ldots, E_{M}\}\right) - \E h\left(\max\{X_1, \ldots, X_{M}\}\right) \right|} \nonumber \\
     \le &\sum_{m=1}^\infty \mathbb{P}(M=m) \sum_{i=1}^{m} \E   \left|h\left(\max\{ E_1,\ldots,E_i, X_{i+1}, \ldots, X_{m} \} \right) \right. \nonumber \\
    & \left. - h \left(\max\{E_1,\ldots,E_{i-1}, X_i, \ldots, X_{m}\}\right)\right| \nonumber \\
    \le &\sum_{m=1}^\infty \mathbb{P}(M=m) \sum_{i=1}^{m} \E\left| E_i - X_i\right| \|h'\| 
    = \sum_{i=1}^\infty \E\left| E_i - X_i\right|  \PP (M \ge i) \|h'\|. \label{disBW_2_exp}
\end{align}

Similarly, the minimum case ($\alpha =-1$) follows as $\min(X_i) \le \max(X_i) \le \sum X_i$.
Adding \eqref{disBW_1_exp} and \eqref{disBW_2_exp} and taking the supremum over all $h \in {\rm Lip}_b(1)$ gives the first assertion. The second assertion follows from coupling $M$ and $N$ so that $M=N$ a.s., in which case \eqref{disBW_1_exp} vanishes.
\end{proof}

Propositions \ref{CCR_comp}, \ref{prop:comp} and \ref{prop:lindeberg} complement each other; the first two yield bounds in $d_{TV}$ distance, for a general variable $W$, while the last result can be translated into a bound in $d_{BW}$ distance, for comparing CCR distributions; see Remark \ref{rem:lindeberg} for more details. We note that Proposition \ref{CCR_comp} can be used to compare a maximum and a minimum, whereas Proposition \ref{prop:comp} and \ref{prop:lindeberg} require the same value of $\alpha$.

\section{Application to the Poisson-exponential distribution} \label{sec:PE_dist}
 
Cancho et al. \cite{cancho2011poisson} introduced the Poisson-exponential distribution as a distribution of the maximum of $N$ independently and identically distributed exponential random variables from an infinite sequence ${\bf{E}}=  (E_1, E_2, \ldots,)$ so that $E_i \sim {\rm Exp}(\lambda)$ with parameter $\lambda$ (having mean $1/{\lambda}$), and $N$ following a zero-truncated Poisson distribution with parameter $\theta$, independently of  ${\bf{E}}$. This maximum has the PE distribution with parameters $\theta, \lambda >0$, denoted by $PE(\theta, \lambda)$, which has the differentiable pdf 
\begin{equation} \label{pdf}
p({w}| \theta, \lambda) 
= \frac{\theta \lambda e^{-\lambda {w} - \theta e^{-\lambda {w}}}}{1 - e^{-\theta}}; \quad \quad {w} > 0.
\end{equation}
To obtain a Stein operator for $PE(\theta, \lambda)$ we use \eqref{generalscore} with  
$G_N({u}) = \frac{e^{-\theta}}{1 - e^{-\theta}} \left( e^{\theta{u}} - 1 \right)$ and 
$\frac{G_N''({u})}{G_N'({u})} = \theta,$ yielding the  score function
\begin{align} \label{scorePE}
    \rho({w}) = \lambda (\theta e^{-\lambda {w}}-1);
\end{align}
\eqref{CCR_stein_op} gives
\begin{equation} \label{steinoperator}
{\mathcal T} f ({w}) = f'({w}) + \lambda (\theta e^{-\lambda {w}}-1) f({w}).
\end{equation}
For a  bounded test function $h 
\in \mathcal{H}$, the score $PE(\theta, \lambda)$-Stein equation is
\begin{equation} \label{steinPE}
    f'(w) + \lambda(\theta e^{-\lambda{w}}-1)f({w}) = h({w}) - \E h({W}),
\end{equation}
with the solution $f_h$, given by \eqref{solution}, satisfying the following bounds.

\begin{lemma} \label{allboundsforPE}
Let $ h: \R^+_{{>0}} \rightarrow \R  $ be bounded and $f$ denote the solution \eqref{solution} of the Stein equation \eqref{steinPE} for $h$. Let $\Tilde{h}(w) = h(w)-\E h(W)$ for $W \sim PE(\theta, \lambda)$. Then for all $w>0$ 
\begin{eqnarray} 
\label{helpful}
    \left|e^{-\lambda w}f(w) \right| &\le&\frac{ \|\Tilde{h}\|}{\theta \lambda } \left( 1-e^{-\theta + \theta e^{-\lambda w}}\right)\\
\label{boundexp}
     &\le& \frac{\|\Tilde{h}\|}{\theta \lambda}; \\
\label{bound2}
    |\lambda (\theta e^{-\lambda w}-1) f(w)| &\le& \|\Tilde{h}\|;\\
\label{boundfx}
    |f(w)| &\le& \frac{2\|\Tilde{h}\| }{\lambda}; \\
\label{bound1}
    |f'(w)|& \le & 2\|\Tilde{h}\|.
\end{eqnarray}
If in addition $h \in {\rm Lip}_b(1)$, then at all points $w$ at which $h'$ exists,
\begin{equation} \label{bound3}
    |f''(w)| \le \|h'\| + 2\lambda\theta \|\Tilde{h}\| + 3\lambda \|\Tilde{h}\|.
\end{equation}
\end{lemma}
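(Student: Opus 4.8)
The plan is to derive each bound in turn, working from the explicit representation \eqref{solution} and the Stein equation \eqref{steinPE} itself, and then to obtain the second-derivative bound \eqref{bound3} by differentiating the Stein equation.

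\textbf{Steps for \eqref{helpful}--\eqref{bound1}.} First I would multiply the representation $f(x) = -\frac{1}{p(x)}\int_x^\infty \Tilde h(t) p(t)\,\mathrm dt$ by $e^{-\lambda x}$, use $|\Tilde h(t)| \le \|\Tilde h\|$, and evaluate $\int_x^\infty p(t)\,\mathrm dt = 1 - F(x)$ explicitly from \eqref{cdf}. Writing $e^{-\lambda x}/p(x) = (1 - e^{-\theta}) e^{\theta e^{-\lambda x}}/(\theta\lambda)$, one gets
\[
|e^{-\lambda x} f(x)| \le \frac{\|\Tilde h\|}{\theta\lambda}\, e^{\theta e^{-\lambda x}}\bigl(e^{-\theta e^{-\lambda x}} - e^{-\theta}\bigr) = \frac{\|\Tilde h\|}{\theta\lambda}\bigl(1 - e^{-\theta + \theta e^{-\lambda x}}\bigr),
\]
which is \eqref{helpful}; since $\theta e^{-\lambda x} < \theta$ the bracket is at most $1$, giving \eqref{boundexp}. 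For \eqref{bound2} I would distinguish the two cases $\theta e^{-\lambda x} \le 1$ and $\theta e^{-\lambda x} > 1$: in the first case use the other representation $f(x) = \frac{1}{p(x)}\int_0^x \Tilde h(t) p(t)\,\mathrm dt$ together with $F(x) \le 1$ and the factor $|\lambda(\theta e^{-\lambda x} - 1)| = \lambda(1 - \theta e^{-\lambda x}) \le \lambda$; in the second case combine $|\lambda(\theta e^{-\lambda x}-1)| \le \lambda\theta e^{-\lambda x}$ with \eqref{helpful}, noting that the function $u \mapsto (1 - e^{-\theta + u})$ with $u = \theta e^{-\lambda x} \in (1,\theta)$ is controlled appropriately. (Alternatively one may just bound $|\lambda(\theta e^{-\lambda x} - 1) f(x)|$ directly by splitting into the $-\lambda f(x)$ and $\lambda\theta e^{-\lambda x} f(x)$ contributions and using \eqref{boundfx} and \eqref{boundexp}; I will choose whichever split yields the clean constant $\|\Tilde h\|$.) For \eqref{boundfx} I would again split at $\theta e^{-\lambda x} = 1$: for $\theta e^{-\lambda x} \ge 1$, bound $|f(x)|$ via $|f(x)| = e^{\lambda x}|e^{-\lambda x} f(x)| \le e^{\lambda x}\|\Tilde h\|/(\theta\lambda) \le \|\Tilde h\|/\lambda$ using $e^{\lambda x} \le \theta$; for $\theta e^{-\lambda x} < 1$, use $f(x) = \frac{1}{p(x)}\int_0^x \Tilde h(t)p(t)\,\mathrm dt$, bound $|f(x)| \le \|\Tilde h\| F(x)/p(x)$ and check that $F(x)/p(x) \le 2/\lambda$ on this range by elementary estimates on \eqref{pdf}--\eqref{cdf}. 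Then \eqref{bound1} is immediate from the Stein equation \eqref{steinPE}: $|f'(x)| \le \|\Tilde h\| + |\lambda(\theta e^{-\lambda x}-1) f(x)| \le 2\|\Tilde h\|$ by \eqref{bound2}.

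\textbf{Step for \eqref{bound3}.} When $h \in {\rm Lip}_b(1)$, I would differentiate the Stein equation \eqref{steinPE}, which holds for a.e.\ $x$, to obtain
\[
f''(x) = h'(x) + \lambda^2\theta e^{-\lambda x} f(x) - \lambda(\theta e^{-\lambda x} - 1) f'(x).
\]
Then bound the three terms: $|h'(x)| \le \|h'\|$; $|\lambda^2\theta e^{-\lambda x} f(x)| = \lambda\theta \cdot |\lambda e^{-\lambda x} f(x)| \le \lambda\theta\|\Tilde h\|$ using the already-established bound $|\lambda e^{-\lambda x} f(x)| = \lambda |e^{-\lambda x}f(x)| \le \|\Tilde h\|/\theta$ from \eqref{boundexp}, so this term is $\le \lambda\theta\|\Tilde h\|$ — but to match the stated constant $2\lambda\theta\|\Tilde h\|$ I will instead bound $\lambda^2\theta e^{-\lambda x}|f(x)| \le \lambda\theta e^{-\lambda x}\cdot 2\|\Tilde h\| \le 2\lambda\theta\|\Tilde h\|$ via \eqref{boundfx}; and $|\lambda(\theta e^{-\lambda x}-1)f'(x)| \le |\theta e^{-\lambda x} - 1|\cdot\lambda|f'(x)|$, which I would handle by writing it as $\le \lambda\theta e^{-\lambda x}|f'(x)| + \lambda|f'(x)|$ and using \eqref{bound1} plus the refined bound $\lambda\theta e^{-\lambda x}|f'(x)| \le \lambda\|\Tilde h\|$ obtainable from $\theta e^{-\lambda x}|f'(x)|$ estimates, or more simply bound this term by $3\lambda\|\Tilde h\|$ directly from \eqref{bound1} when $\theta e^{-\lambda x} \le 1$ and fold the $\theta e^{-\lambda x} > 1$ contribution into the $2\lambda\theta$ term; summing gives \eqref{bound3}.

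\textbf{Main obstacle.} The routine parts are the explicit integrations; the delicate point is keeping the constants as sharp as claimed across the case split at $\theta e^{-\lambda x} = 1$, since the factor $\theta e^{-\lambda x} - 1$ changes sign there and naive use of $|\theta e^{-\lambda x} - 1| \le \theta + 1$ would not give the stated linear-in-$\theta$ constants. I expect the bulk of the care to go into verifying the elementary inequality $F(x)/p(x) \le 2/\lambda$ (equivalently $2\theta e^{-\lambda x} \ge 1 - e^{\theta e^{-\lambda x} - \theta}$ on the relevant range) and into choosing, for each of \eqref{bound2}, \eqref{boundfx}, \eqref{bound3}, the representation of $f$ and the decomposition of the prefactor that delivers exactly the advertised constant rather than a slightly worse one.
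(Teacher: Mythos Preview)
Your overall architecture matches the paper's, but there is a genuine gap in your route to \eqref{boundfx}, and a related mix-up of the two representations that also affects \eqref{helpful} and \eqref{bound2}.

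The inequality you flag as the ``main obstacle'', $F(x)/p(x)\le 2/\lambda$, is simply \emph{false}: with $u=\theta e^{-\lambda x}$ one has $F(x)/p(x)=\bigl(1-e^{u-\theta}\bigr)/(\lambda u)\to(1-e^{-\theta})/0=\infty$ as $x\to\infty$. So a direct attack on \eqref{boundfx} via $|f|\le\|\tilde h\|F/p$ cannot work. The paper instead proves \eqref{bound2} \emph{first} and then obtains \eqref{boundfx} from the identity $\lambda f(x)=\lambda\theta e^{-\lambda x}f(x)-\lambda(\theta e^{-\lambda x}-1)f(x)$, bounding the two pieces by \eqref{boundexp} and \eqref{bound2} to get $|\lambda f(x)|\le\|\tilde h\|+\|\tilde h\|$. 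In particular your proposed ``alternative'' for \eqref{bound2} (split into $-\lambda f$ and $\lambda\theta e^{-\lambda x}f$ and cite \eqref{boundfx}) is circular: the dependency runs the other way.

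You also have the two representations in \eqref{solution} swapped. For \eqref{helpful} you say you use $-\frac{1}{p}\int_x^\infty$, but that produces $1-F(x)$ and gives $\bigl(e^{\theta e^{-\lambda x}}-1\bigr)/(\theta\lambda)$, which is \emph{not} bounded by $1/(\theta\lambda)$; the stated bound comes from the $\frac{1}{p}\int_0^x$ form, yielding $F(x)$ and hence $(1-e^{-\theta+\theta e^{-\lambda x}})/(\theta\lambda)$, exactly as the paper does. Likewise for \eqref{bound2} the case split should pair $\{\theta e^{-\lambda x}>1\}$ with $\int_0^x$ and $\{\theta e^{-\lambda x}\le 1\}$ with $\int_x^\infty$, not the reverse. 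The paper's clean trick in each case is to use monotonicity of the score: for $t<x<\tfrac{\ln\theta}{\lambda}$ one has $\lambda(\theta e^{-\lambda x}-1)p(t)\le p'(t)$, so $\lambda(\theta e^{-\lambda x}-1)\int_0^x p\le p(x)-p(0)\le p(x)$; for $t>x\ge\tfrac{\ln\theta}{\lambda}$ one has $\lambda(1-\theta e^{-\lambda x})p(t)\le -p'(t)$, so $\lambda(1-\theta e^{-\lambda x})\int_x^\infty p\le p(x)$. This delivers the constant $\|\tilde h\|$ without any delicate elementary inequalities.

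Your treatment of \eqref{bound1} and \eqref{bound3} is essentially the paper's. For \eqref{bound3} the paper just uses $|\lambda(\theta e^{-\lambda x}-1)f'(x)|\le(\lambda\theta e^{-\lambda x}+\lambda)\cdot 2\|\tilde h\|$ from \eqref{bound1} and $|\theta\lambda^2 e^{-\lambda x}f(x)|\le\lambda\|\tilde h\|$ from \eqref{boundexp}; no further case analysis is needed.
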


\begin{proof} We write $p$ for the pdf of $PE(\theta, \lambda)$. To prove \eqref{helpful} and \eqref{boundexp}, we bound
\begin{eqnarray*}
 \left|e^{-\lambda w}f(w) \right|
    \le  \|\Tilde{h}\|  \frac{e^{-\lambda w}}{p(w)} \int_0^{w} p(t) \mathrm{d}t
     =   \|\Tilde{h}\| \frac{1-e^{-\theta(1-e^{-\lambda w})}}{\theta \lambda},
\end{eqnarray*}
and \eqref{helpful} follows. From $1-e^{-y} < 1 \ \forall \ y>0$, we get \eqref{boundexp}. 

\medskip
{\emph{Proof of \eqref{bound2}}}.
{\it Case 1: $ \theta e^{-\lambda w} - 1 > 0$.} In this case
$0<w< \frac{\ln \theta} {\lambda}$ and we have
\begin{align*}
|\lambda (\theta e^{-\lambda w}-1)f(w)|
\le \|\Tilde{h}\|\frac{\lambda (\theta e^{-\lambda w}-1)}{p(w)} \int_0^w p(t) \mathrm{d}t.
\end{align*}

As $p'(t) = \lambda (\theta e^{-\lambda t}-1) p(t)\ge \lambda (\theta e^{-\lambda w}-1) p(t)$ for $ 0< t < w < \frac{\ln \theta }{\lambda}$, it follows that  
\begin{align*}
 0 & < \frac{\lambda (\theta e^{-\lambda w}-1)}{p(w)} \int_0^w p(t) \mathrm{d}t \le \frac{1}{p(w)} \int_0^w p'(t) \mathrm{d}t =  (1-e^{-\theta (1 - e^{-\lambda w}) + \lambda w})\le 1.
\end{align*}
Hence we obtain the bound \eqref{bound2} for $0<w< \frac{\ln\theta} {\lambda}$. 

\medskip
{\it Case 2: $ \theta e^{-\lambda w} - 1 \le 0$.} 
In this case $w \ge \frac{\ln\theta}{\lambda}$ and
$$ 0 < \lambda (1-\theta e^{-\lambda w}) p(t)
< \lambda (1-\theta e^{-\lambda t}) p(t) = p'(t).
$$ Using \eqref{solution} gives
\begin{align*}
    |\lambda (\theta e^{-\lambda w}-1) f(w)|  \le \|\Tilde{h}\| \frac{ \lambda (1-\theta e^{-\lambda w}) }{p (w)} \int_{w}^{\infty} p(t)\mathrm{d}t
    \le \|\Tilde{h}\|
    \frac{1}{p (w)} \int_{w}^{\infty} p'(t)\mathrm{d}t
    \le \|\Tilde{h}\|.
\end{align*} 
Hence the bound \eqref{bound2} follows for all $w>0$.

\medskip
{\emph{Proof of \eqref{boundfx}, \eqref{bound1} and \eqref{bound3}}}.
As $ \lambda(\theta e^{-\lambda w} -1)f(w) = \lambda\theta e^{-\lambda w} f(w) - \lambda f(w),$ the triangle inequality along with \eqref{bound2} and \eqref{boundexp} gives \eqref{boundfx}. To show \eqref{bound1}, using the triangle inequality, from \eqref{steinPE} we obtain 
$|f'({w})| \le |h({w})-\E h({W})|+|\lambda (\theta e^{-\lambda {w}}-1)f({w})|$
and using \eqref{bound2} yields the bound \eqref{bound1}. 

Now  for $h$  differentiable at $w$, taking the first order derivative  in \eqref{steinPE} gives
 $
     |f''(w)| \le |h'(w)| + |\lambda(\theta e^{-\lambda w} -1)f'(w)| + |\theta \lambda^2 e^{-\lambda w}f(w)|.
 $
Using \eqref{boundexp} and \eqref{bound1} we obtain the bound \eqref{bound3} through
\begin{align*}
    |f''(w)| &\le \|h'\| + \lambda|\theta e^{-\lambda w} -1| 2\|\Tilde{h}\| + \theta \lambda^2 \frac{\|\Tilde{h}\|}{\theta \lambda} \le \|h'\| + 2\lambda\theta e^{-\lambda w} \|\Tilde{h}\| + 2\lambda \|\Tilde{h}\| +  \lambda \|\Tilde{h}\|.
\end{align*}
This completes the proof.
\end{proof}

\begin{remark} 
  If $\theta \rightarrow 0$,  the PE  distribution converges to the exponential distribution ${\rm Exp} (\lambda)$; when $\lambda = 1$, \eqref{steinPE} reduces to the Stein equation (4.2) in \cite{pekozandrollin2011}. For this simplified version, the bound in \cite{pekozandrollin2011} is only $\frac12$ of the bound \eqref{bound1}; this discrepancy arises through our use of the triangle inequality for $\theta > 0$. 
\end{remark}

In the following subsections, we compare a {PE} distribution with a {distribution of a maximum of a random number of i.i.d random variables, with a} generalised Poisson-exponential distribution, and with a Poisson-geometric distribution.

\subsection{Approximating the distribution of the maximum of a random number of i.i.d.\,random variables by a PE distribution}\label{sec:approxrandmax}
 
Let $M\in \N$ be independent of ${\bf{X}} = (X_1, X_2, \ldots)$, a sequence of i.i.d.\,random variables, and let $W=W_1(M, {\bf{X}})$ have pdf of the form \eqref{CCR_class} for $\alpha = 1$. Our first comparison result employs Stein's method.

\begin{corollary} \label{prop:general}
Assume that the $X_i's$ have differentiable pdf $p_X$, cdf  $F_X$, and score function $\rho_X$. Let $W = W_1(M, {\bf{X}}) = \max \{X_1, \ldots, X_M\}$. Then 
\begin{eqnarray}\label{eq:peprop3}
    d_{TV}\left(PE(\theta, \lambda), \mathcal{L}(W_1(M, {\bf{X}}))\right) \le \frac{4}{\lambda} \left(\EE\left|\theta \lambda e^{-\lambda W}- \frac{(G_M'' \circ F_X)(W)}{(G_M' \circ F_X)(W)} p_X(W) \right| \right. \nonumber \\
    \left. + \EE \left| - \lambda  - \rho_X (W) \right|\right) . 
\end{eqnarray} 
If $M$ is a zero-truncated Poisson$(\theta_m)$ random variable then \eqref{eq:peprop3} reduces to
 $$ d_{TV}\left(PE(\theta, \lambda), \mathcal{L}(W_1(M, {\bf{X}}))\right) \le \frac{4}{\lambda} \left(\E |\theta\lambda e^{-\lambda W} - \theta_mp_X(W) | + \E | - \lambda  - \rho_X (W) | \right) . $$
\end{corollary}
\begin{proof} We employ Proposition \ref{CCR_comp}. For a zero-truncated Poisson random variable $N$ with parameter $\theta$ so that $G_N''(\cdot) / G_N'(\cdot) = \theta$, and for $Y \sim {\rm Exp}(\lambda)$ with pdf $f_Y(y) = \lambda e^{-\lambda y}$, using \eqref{eq:gencomp} along with \eqref{boundfx}, and by taking $h$ an indicator function so that  $|| \tilde{h} || \le 2 ||h|| \le 2$, gives \eqref{eq:peprop3}. The simplification when $M$ is a zero-truncated Poisson random variable follows from $G_M''(\cdot) / G_M'(\cdot) = \theta_M$.
\end{proof}
\begin{remark}
    As $-\lambda$ is the score function of the exponential distribution ${\rm Exp}(\lambda)$, for $M$ a zero-truncated Poisson $(\theta)$ random variable the bound in Corollary \ref{prop:general} is close to zero if the density and the score function of $X$ are close to that of ${\rm Exp}(\lambda)$.
\end{remark}
The next result is based on the Lindeberg argument.
\begin{corollary} \label{PE_linderberg}
     Let $W_1(N, {\bf{E}}) \sim \text{PE}(\theta, \lambda)$ and let $W_{1}(M, {\bf{X}}) = \max(X_1, \ldots, X_M)$ have a CCR distribution. Then 
     \begin{eqnarray*}
     \lefteqn{d_{BW}(PE(\theta, \lambda), \mathcal{L}(W_{1}(M, {\bf{X}})) \le   \E M \, \E| E_1 - X_1|} \nonumber \\
     && \quad \quad \quad \quad \quad \quad \quad \quad + \sum_{m,n=1}^\infty \PP (M=m, N=n) \left( H_{\max\{m,n\}} - H_{\min\{m,n\}}\right)
 \end{eqnarray*}
where $H_n$ is the $n^{th}$ harmonic number.

 If $M$ is also a zero-truncated Poisson$(\theta)$ random variable, then 
    $$ d_{BW}(PE(\theta, \lambda), \mathcal{L}(W_{1}(M, {\bf{X}})) \le \frac{\theta}{1 - e^{-\theta} } \E | E_1 - X_1|. $$
 \end{corollary}
 \begin{proof} Proposition \ref{prop:lindeberg} gives 
     \begin{eqnarray*}
     \lefteqn{|\E h(W_{1}(N, {\bf{E}})) - \E  h (W_{1}(M, {\bf{X}})) | \le  \sum_{i=1}^\infty  \E | E_i - X_i| \PP (M \ge i) \| h'\|} \nonumber \\
     &+& \sum_{m,n=1}^\infty \PP (M=m, N=n) \left|\E h(W_{1}(n, {\bf{E}})) -
     \E h(W_{1}(m, {\bf{E}}))\right|\| h'\|.
 \end{eqnarray*}
 Since the random variables in ${\bf{X}}$ are i.i.d.\,and ${\bf{E}}$ are i.i.d.\,exponential with parameter $\lambda$ and the expectation of maximum of $n$ exponential random variables is $\sum_{i=1}^n \frac1i = H_n$, the assertion follows.
 \end{proof}
 
\subsection{Approximating the generalised Poisson-exponential distribution} \label{subsec:GPE}

The PE distribution has an increasing or constant failure rate. To model decreasing failure rate as well, Fatima \& Roohi \cite{fatima2015journal} introduced the family of generalised Poisson-exponential (GPE) distributions. The differentiable pdf of a GPE distribution with parameters $\beta, \theta,\lambda > 0 $, denoted by $GPE(\theta,\lambda,\beta)$, is
\begin{equation} \label{pdfgpe}
    p(x| \theta, \lambda, \beta) 
= \frac{\beta \theta \lambda e^{-\lambda x - \theta \beta e^{-\lambda x}}}{(1 - e^{-\theta})^{\beta}} (1-e^{-\theta + \theta e^{-\lambda x}})^{\beta -1} ; \quad \quad x > 0.
\end{equation}
For $\beta = 1$ the density of the GPE distribution simplifies to that of the PE distribution given in \eqref{pdf}. For ${0 < \beta < 1}$ the pdf of the GPE distribution is monotonically decreasing, while for $\beta \ge 1$ it is unimodal positively skewed with skewness depending on the shape parameters $\beta$ and $\theta$. The shape of the hazard function also depends on these two shape parameters. For example, for $\theta = 1$ and $\lambda =2$, the failure rate is decreasing for $ 0 < \beta <1$ and is increasing for $\beta \ge 1$, as in Figure 2 of \cite{fatima2015journal}.

For a data set from \cite{aarset1987} which consists of 50 observations on the time to first failure of devices, \cite{fatima2015journal} showed that the GPE distribution provides a better fit than the PE and some other candidate distributions. However, a GPE distribution is not as easy to manipulate and interpret as a PE distribution. Hence it is a natural question to quantify the sacrifice when approximating a {GPE} distribution with a PE distribution. Here we note that GPE is not from the CCR family and hence Proposition \ref{prop:lindeberg} cannot be applied. Instead we use Stein's method to bound the approximation error. 

For such an approximation to be intuitive, the failure rate of the approximating distribution should be qualitatively similar. Hence we restrict  attention to the case $\beta \ge 1$, for which both the GPE and the PE distributions have an increasing failure rate. As an aside, we note that for $\beta \ge 1$, the limit of the pdf at $0$ is $0$, while for $0 < \beta < 1$ it is undefined; the latter condition leads to a Stein class for the GPE Stein score operator which differs from the Stein class for the PE Stein score operator.

We note here that if $X \sim GPE(\theta, \lambda, \beta)$, then, for $\beta \ge 1$,
\begin{equation} \label{bound_mean_GPE}
    \E X \le \frac{\beta \theta}{\lambda(1 - e^{-\theta})^{\beta}}. 
\end{equation}
The proof of \eqref{bound_mean_GPE} is deferred to the appendix.

The GPE random variable is not of the CCR form \eqref{eq:CCR_rv} but its score function, given in \eqref{scoreGPE} below, can be derived from its pdf \eqref{pdfgpe} with parameters $\beta, \theta, \lambda > 0$ as
\begin{equation} \label{scoreGPE}
    \rho(x) = \frac{p'(x)}{p(x)} = \lambda \theta e^{-\lambda x} \Bigg[\beta +\frac{(\beta-1) e^{-\theta + \theta e^{-\lambda x}}}{1-e^{-\theta + \theta e^{-\lambda x}}}\Bigg] - \lambda,   \quad \quad x > 0.
\end{equation}
In the following Theorem we bound the distance between a GPE with $\beta \ge 1$ and a PE distribution, using their corresponding score Stein operators in \eqref{comparison}.

\bigskip
\bigskip
\begin{theorem} \label{GPEbound_overall}
Let $W \sim PE(\theta_1, \lambda_1)$, $X \sim GPE(\theta_2, \lambda_2, \beta)$ and $h: {\R^+_>0} \rightarrow \R$ be bounded, and let $\tilde h(x) = h(x) - \E h(W)$. Then for $\lambda_1 \le \lambda_2$ and $\beta \ge 1$ it holds that
\begin{equation}\label{GPEbound}
    |\E h(X)-\E h(W)| \le  \|\Tilde{h}\| \left\{ \frac{\lambda_2}{\lambda_1}|\beta-1|  +  \left|\frac{\lambda_2 \theta_2}{\lambda_1 \theta_1}\beta -  1  \right| +  \left( \frac{\lambda_2}{\lambda_1} - 1\right) \left( \lambda_1 \E X + 2\right) \right\}. 
\end{equation}
\end{theorem}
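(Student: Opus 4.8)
The plan is to mimic the proof of Theorem~\ref{boundsPE1}, using the comparison identity \eqref{comparison} with $p_1 = PE(\theta_1,\lambda_1)$ and $p_2 = GPE(\theta_2,\lambda_2,\beta)$, together with the score functions \eqref{scorePE} and \eqref{scoreGPE}. Having just verified that $f$ solving the PE Stein equation for $h$ lies in the Stein class of $p_2$, we may write
\[
\E h(X_2) - \E h(X_1) = \E\!\left[f_1(X_2)\bigl(\rho_2(X_2) - \rho_1(X_2)\bigr)\right],
\]
where $f_1$ is the solution \eqref{solution} of the PE Stein equation. The first step is to compute the difference of scores explicitly: with $u = e^{-\lambda_2 X_2}$ and $v = e^{-\lambda_1 X_2}$,
\[
\rho_2(X_2) - \rho_1(X_2) = \lambda_2\theta_2 u\Bigl[\beta + \tfrac{(\beta-1)e^{-\theta_2+\theta_2 u}}{1-e^{-\theta_2+\theta_2 u}}\Bigr] - \lambda_2 - \lambda_1\theta_1 v + \lambda_1.
\]

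The second step is to split this into three groups: a term of the form $\lambda_2\theta_2 u \cdot \tfrac{(\beta-1)e^{-\theta_2+\theta_2 u}}{1-e^{-\theta_2+\theta_2 u}}$ carrying the $|\beta-1|$ contribution; a term $\beta\lambda_2\theta_2 u - \lambda_1\theta_1 v$ which I would handle exactly as in \eqref{eq:useful}, i.e.\ using $e^{(\lambda_1-\lambda_2)x}\le 1$ and $|e^{-x}-1|\le x$ to bound it by $|\beta\lambda_2\theta_2 - \lambda_1\theta_1| e^{-\lambda_1 X_2} + (\lambda_2-\lambda_1) X_2 \beta\lambda_2\theta_2 e^{-\lambda_2 X_2}$ (or a suitable variant); and the leftover $\lambda_1 - \lambda_2$. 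For each piece I multiply by $|f_1(X_2)|$ and take expectations. The crucial point is that every occurrence of $f_1$ gets paired with a factor $e^{-\lambda_1 X_2}$ or $e^{-\lambda_2 X_2}$, so I can apply the bound \eqref{helpful}/\eqref{boundexp}, namely $|e^{-\lambda_1 x} f_1(x)| \le \|\tilde h\|/(\theta_1\lambda_1)$ (and similarly with $e^{-\lambda_2 x}\le e^{-\lambda_1 x}$ since $\lambda_2\ge\lambda_1$), or the plain bound \eqref{boundfx} $|f_1(x)|\le 2\|\tilde h\|/\lambda_1$ for the $\lambda_1-\lambda_2$ term. The $X_2 e^{-\lambda_2 X_2}|f_1(X_2)|$ term is controlled by \eqref{helpful} followed by $\E X_2$, exactly as in the proof of Theorem~\ref{boundsPE1}, producing the $\lambda_1 \E X_2$ summand; the bound on $\E X_2$ is supplied by Lemma~\ref{lem:gpemeanmgf}. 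Collecting the three contributions and dividing through by $\lambda_1\theta_1$ where appropriate yields the three bracketed terms $\tfrac{\lambda_2}{\lambda_1}|\beta-1|$, $\bigl|\tfrac{\lambda_2\theta_2}{\lambda_1\theta_1}\beta - 1\bigr|$, and $\bigl(\tfrac{\lambda_2}{\lambda_1}-1\bigr)(\lambda_1\E X_2 + 2)$.

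The main obstacle I anticipate is the $\beta-1$ term: I need a clean uniform bound on $\bigl|\lambda_2\theta_2 e^{-\lambda_2 X_2}\cdot \tfrac{(\beta-1)e^{-\theta_2+\theta_2 e^{-\lambda_2 X_2}}}{1-e^{-\theta_2+\theta_2 e^{-\lambda_2 X_2}}} f_1(X_2)\bigr|$ that comes out to $\tfrac{\lambda_2}{\lambda_1}|\beta-1|\|\tilde h\|$. Writing $e^{-\lambda_2 x} f_1(x)$ and invoking \eqref{helpful} gives a factor $\tfrac{\|\tilde h\|}{\theta_1\lambda_1}(1 - e^{-\theta_1+\theta_1 e^{-\lambda_1 x}})$, so I must check that $\lambda_2\theta_2 \cdot \tfrac{(1-e^{-\theta_1+\theta_1 e^{-\lambda_1 x}})}{\theta_1\lambda_1}\cdot \tfrac{e^{-\theta_2+\theta_2 e^{-\lambda_2 x}}}{1-e^{-\theta_2+\theta_2 e^{-\lambda_2 x}}} \le \tfrac{\lambda_2}{\lambda_1}$, i.e.\ that $\theta_2 \cdot \tfrac{e^{-\theta_2+\theta_2 e^{-\lambda_2 x}}(1-e^{-\theta_1+\theta_1 e^{-\lambda_1 x}})}{\theta_1(1-e^{-\theta_2+\theta_2 e^{-\lambda_2 x}})} \le 1$; this should follow from an elementary monotonicity/convexity estimate on $y\mapsto \tfrac{y e^{-y}}{1-e^{-y}}$ type quantities, possibly using $\lambda_1\le\lambda_2$ to compare the two exponents, but it is the one place where the argument is not a verbatim transcription of the PE–PE proof and will need care. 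Everything else is routine bookkeeping with the triangle inequality and the solution bounds from Lemma~\ref{allboundsforPE}.
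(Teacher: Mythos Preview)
Your proposal is correct and matches the paper's approach essentially line by line: comparison identity \eqref{comparison}, the same three-way split of the score difference, the same use of \eqref{eq:useful} for the middle term, and \eqref{boundexp}/\eqref{boundfx} for the remaining pieces. The inequality you flag as the one nontrivial step is exactly the paper's \eqref{bound_func_exp}, and it drops out immediately from $e^{y}-1\ge y$, $1-e^{-y}\le y$, and $(1-e^{-\lambda_1 x})/(1-e^{-\lambda_2 x})\le 1$ for $\lambda_1\le\lambda_2$, so no deeper monotonicity or convexity argument is needed.
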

\begin{proof}
Let $p_W$ and $p_X$ denote the pdfs of $W$ and $X$, and $\mathcal{T}_{X}$ denote a score Stein operator for a GPE distribution. To employ \eqref{comparison}, first we check that $f_W$ as in \eqref{solution} for the PE distribution, for $h$ bounded, is in the Stein class of $\mathcal{T}_X$. Invoking Lemma \ref{allboundsforPE}, $f_W$ is bounded. Now $ {\E} [{\mathcal T}_X f_W (X) ]
=  \int_0^\infty \frac{(f_W p_X)' (x)}{p_X(x)} p_X (x) \mathrm{d} x 
= \lim_{x \to \infty } (f_W p_X) (x) - \lim_{x \rightarrow 0} (f_W p_X) (x),
$ and for $\beta \ge 1$ we have that $ {(f_Wp_{X})(x)} \rightarrow 0$ as $x \rightarrow 0$ and as $x \rightarrow \infty$, showing that $f_W$ is in the Stein class for $\mathcal{T}_{X}$. Applying  \eqref{comparison} with the score functions from \eqref{scorePE} and \eqref{scoreGPE} respectively, we have
\begin{eqnarray*}
    \lefteqn{\E h(X)  -\E h(W)} \nonumber \\ 
    &=& \E \bigg[\lambda_2 \theta_2 e^{-\lambda_2 X} \Bigg(\beta +\frac{(\beta-1) e^{-\theta_2 + \theta_2 e^{-\lambda_2 X}}}{1-e^{-\theta_2 + \theta_2 e^{-\lambda_2 X}}}\Bigg) - \lambda_2 - \lambda_1 (\theta_1 e^{-\lambda_1 X}-1)  \bigg]f_{W}(X) \\
    &\le& \E \left[\lambda_2 \theta_2 (\beta-1) e^{(\lambda_1-\lambda_2) X} \frac{ e^{-\theta_2 + \theta_2 e^{-\lambda_2 X}}}{1-e^{-\theta_2 + \theta_2 e^{-\lambda_2 X}}} e^{-\lambda_1 X}  |f_W(X)|\right] \nonumber \\
    &&+ \E\left|\lambda_2\theta_2\beta e^{-\lambda_2 X} - \lambda_1 \theta_1 e^{-\lambda_1 X}\right||f_W(X)| + \E|\lambda_1 - \lambda_2||f_W(X)|.
\end{eqnarray*}
Next, 
\begin{equation}{   |\theta_2 \lambda_2 e^{-\lambda_2 x} - \theta_1 \lambda_1 e^{-\lambda_1 x}|}\\
   \le  |\theta_2 \lambda_2 \beta  -\theta_1 \lambda_1 |e^{(\lambda_1 - \lambda_2) x} e^{-\lambda_1 x} + (\lambda_2-\lambda_1) x \theta_1 \lambda_1 e^{-\lambda_1 x}, \label{eq:useful}
   \end{equation}
   since $|e^{-x} - 1 | \le x$ for all $x \ge 0$. With $e^{(\lambda_1 - \lambda_2) x} \le 1$ for $\lambda_1 \le \lambda_2$, we write
\begin{eqnarray}
   \lefteqn{ |\E h(X)  -\E h(W)|} \label{diff_PE_GPE_c2}\\
   &\le&  \lambda_2 \theta_2 |\beta-1| \E \left|\frac{ e^{-\theta_2 + \theta_2 e^{-\lambda_2 X}}}{1-e^{-\theta_2 + \theta_2 e^{-\lambda_2 X}}} e^{-\lambda_1 X} f_W(X)\right| + ( \lambda_2 - \lambda_1)  \E|f_W(X)| \nonumber \\
  &&   + \lambda_1 \theta_1 \left|\frac{\lambda_2 \theta_2}{\lambda_1 \theta_1}\beta -  1  \right|\E|e^{-\lambda_1 X}f_W(X)| + \lambda_1 \theta_1 ( \lambda_2 - \lambda_1)  \E X |e^{-\lambda_1X}f_W(X)| . \nonumber 
\end{eqnarray}
Now using \eqref{helpful} as well as  $e^{x} - 1 \ge x$ and $1-e^{-x} \le x$ for $x \ge 0$,
we have
\begin{align}
    \left|\frac{e^{-\theta_2 + \theta_2 e^{-\lambda_2 x}}}{1-e^{-\theta_2 + \theta_2 e^{-\lambda_2 x}}}e^{-\lambda_1 x} f_W(x)\right| &\le \frac{\|\Tilde{h}\|}{\lambda_1 \theta_1} \frac{e^{-\theta_2 + \theta_2 e^{-\lambda_2 x}}}{1-e^{-\theta_2 + \theta_2 e^{-\lambda_2 x}}} (1-e^{-\theta_1(1 - e^{-\lambda_1 x})})  \nonumber \\
    & \le  \frac{\|\Tilde{h}\|}{\lambda_1 \theta_2} \label{bound_func_exp}. 
\end{align}
Using \eqref{boundexp}, \eqref{boundfx}, \eqref{bound_mean_GPE} and \eqref{bound_func_exp} in \eqref{diff_PE_GPE_c2} gives the bound \eqref{GPEbound}.
\end{proof}

\begin{remark}  \label{GPEbound_all_equal}
For $\lambda_1=\lambda_2$ and $\theta_1=\theta_2$ in Theorem \ref{GPEbound_overall}, the bound \eqref{GPEbound} can be improved by a factor of 2, using that with $f_{W}$ the solution of the  PE score Stein equation,
\begin{align*}
    |\E h(X)  -\E h(W)| = \E  \left(\lambda \theta |\beta - 1|\frac{e^{-\lambda X}}{1-e^{-\theta + \theta e^{-\lambda X}}}f_{W}(X) \right).
\end{align*}
With  \eqref{helpful} and $\tilde h(x) = h(x) - \E h(W)$ we obtain 
\begin{equation}\label{GPEbound2}
     |\E h(X)-\E h(W)|\le \|\Tilde{h}\| |\beta -1 |.
\end{equation}
This bound solely depends on the parameter $\beta$ and tends to 0 when $\beta\rightarrow 1$,  which is in line with the fact that for $\beta  \rightarrow 1$,  $GPE(\theta, \lambda, \beta)$ converges to  $PE(\theta, \lambda)$, see \cite{fatima2015journal}.
\end{remark}
The next result follows immediately from Theorem \ref{GPEbound_overall} and \eqref{bound_mean_GPE}.
\begin{corollary} \label{boundsPE1}
Let $W_1 \sim PE(\theta_1, \lambda_1)$ and $W_2 \sim PE(\theta_2, \lambda_2)$ with $\lambda_1 \le \lambda_2$. Let $\mathcal{H} = \{h: \R^+_{>0} \rightarrow \mathbb{R}, \|h\| \le 1\}$. Then for all $h \in \mathcal{H}$, letting $\tilde h(w) = h(w) - \E h(W)$, we have from \eqref{GPEbound} with $\beta = 1$ that 
\begin{equation} \label{PEbound} 
   |\E h(W_2)-\E h(W_1)| \le  \|\Tilde{h}\| \left\{ \left|\frac{\lambda_2 \theta_2}{\lambda_1 \theta_1} -  1  \right| +  \left( \frac{\lambda_2}{\lambda_1} - 1\right) \left( \frac{\lambda_1 \theta_2}{\lambda_2 (1 - e^{-\theta_2})}  + 2\right) \right\}.
\end{equation}
\end{corollary}

\begin{remark}
\begin{enumerate}
\item For $\|h\| \le 1$, so that $\|\Tilde{h}\| \le 2$, the bounds can easily be converted into bounds in total variation distance using \eqref{eq:dtv}.
\item To bound $d_{\mathcal{H}}(PE(\theta_1,\lambda_1), GPE(\theta_2,\lambda_2,\beta))$ when $\lambda_1 > \lambda_2$ we can use 
\begin{align*}
    d_{\mathcal{H}} (GPE(\theta_2,\lambda_2,\beta) , PE(\theta_1,\lambda_1))  \le & d_{\mathcal{H}} (GPE(\theta_2,\lambda_2,\beta) ,PE(\theta_2,\lambda_2)) \\
    &+ d_{\mathcal{H}}( PE(\theta_2,\lambda_2), PE(\theta_1,\lambda_1))
\end{align*}
and apply Remark \ref{GPEbound_all_equal} and Corollary \ref{boundsPE1}.
\end{enumerate}
\end{remark}

\subsection{Approximating the Poisson-geometric distribution} \label{subsec:PG}

Next we consider the distribution of $W_G = \max\{T_1, \ldots, T_M\}$, where $T_1, T_2, \ldots \in \{1, 2, \ldots\}$ are i.i.d.\,Geometric($p$) random variables, and $M$ is an independent zero truncated Poisson($\theta$) random variable; the distribution of $W_G$ is the  {\it Poisson-geometric (PG) distribution}; it has pmf
\begin{align}
     \PP(W_G=w) = \frac{e^{-(1-p)^w\theta}- e^{-(1-p)^{w-1}\theta}}{(1-e^{-\theta})}, \quad \quad {w} \in \N, \label{PGdist}
\end{align}
and the discrete backward score function
$$\rho_{W_{G}}(w) = 1- \frac{e^{-(1-p)^{w-1}\theta}- e^{-(1-p)^{w-2}\theta}}{e^{-(1-p)^{w}\theta}- e^{-(1-p)^{w-1}\theta}}, \quad w \in \N.$$  

For $T \sim \text{Geometric}(\lambda/n)$, the distribution of $n^{-1} {T}$ converges to ${\rm Exp}(\lambda)$ in probability, and hence it is plausible to approximate the distribution of $Z_n={W_{G,n}}/n$, for ${W_{G,n}} \sim PG(\theta, \lambda/n)$, by a corresponding Poisson-exponential distribution. With $q_n = 1-\lambda/n $  and $\Tilde{\rho}_n (z) = \rho_{W_G} (nz)$,
$$ \Tilde{\rho}_n (z) = 1- \frac{e^{-q_n^{nz-1}\theta}- e^{-q_n^{nz-2}\theta}}{e^{-q_n^{nz}\theta}- e^{-q_n^{nz-1}\theta}}, \quad nz \in \N.$$ 
This function is the ratio of two exponential functions, 
complicating the comparison using \eqref{comparison}.
To simplify the comparison we use Proposition \ref{prop:comp}. From \eqref{eq:discretestin}, a standardised PG Stein operator for $Z_n$ is 
\begin{equation} \label{eq:trans3}
  {\mathcal{T}_{Z_n} ^{(d)}} (g(z)) =  g(z)  d(z) - g \left( z - \frac1n \right) d \left( z - \frac1n \right)  +\Tilde{\rho}_n(z) g \left( z - \frac1n \right)d \left( z - \frac1n \right);
\end{equation} 
here we choose 
\begin{equation} \label{eq:d} d(z) = e^{-q_n^{nz + 1}\theta}- e^{-q_n^{nz}\theta}
= e^{ -\theta {q_n^{nz}}} \left( e^{ -\theta (q_n-1) q_n^{nz}} -1 \right)\end{equation}
so that 
$ \Tilde{\rho}_n(z) d\left( z - \frac1n \right) = e^{-q_n^{nz }\theta}- 2 e^{-q_n^{nz-1}\theta} + e^{-q_n^{nz -2}\theta}.$
As $nd(z){\rightarrow} \lambda \theta e^{-\lambda z - \theta e^{-\lambda z}}$ as $n \rightarrow \infty$, for the approximating PE distribution we choose 
$$ c(z) =  \frac{\lambda \theta}{n}e^{-\lambda z - \theta e^{-\lambda z}}, \quad \quad for \quad z > 0,$$ 
as a standardisation function in \eqref{stand_stein_cont},  giving rise to the  standardised PE Stein equation,
\begin{equation} \label{mPEStein}
   c(w) g'(w)  + [c(w)  \lambda(\theta e^{-\lambda w}-1)+ c'(w)] g(w) = h(w) - \E h(W),
\end{equation}
for $W \sim PE(\theta, \lambda)$. Again we can bound the solution of this Stein equation as follows.
\begin{lemma} \label{lem:scaledbound}
For $W \sim PE(\theta, \lambda)$ and g(w), the solution of Stein equation \eqref{mPEStein}, for bounded differentiable $h$ such that $\|h\| \le 1$ and $\|h'\| \le 1$ with $\Tilde{h} (x) = h(x) - \E h(W)$, and $ c(w) =  \frac{\lambda \theta}{n}e^{-\lambda {w} - \theta e^{-\lambda {w}}}$, we have
  \begin{eqnarray}
    |e^{-2\lambda w - \theta e^{-\lambda w}} g(w)| &\le& \frac{n}{\lambda^2\theta^2} \|\Tilde{h}\|, \label{msbound1} \\
    \bigg|e^{-\lambda w - \theta e^{-\lambda w}} (\theta e^{-\lambda w} - 1)g(w) \bigg| &\le& \frac{n}{\lambda^2 \theta}\|\Tilde{h}\|, \label{msbound2}\\
    |e^{-\lambda w - \theta e^{-\lambda w}}g(w)| &\le& 2\frac{n}{\lambda^2\theta} \|\Tilde{h}\|, \label{msbound5} \\
    \bigg|e^{-\lambda w - \theta e^{-\lambda w}}g'(w) \bigg| &\le&  3 \frac{n}{\lambda \theta} \|\Tilde{h}\|, \label{msbound3}\\
    \bigg|\frac{\lambda \theta}{n}e^{-\lambda w - \theta e^{-\lambda w}}g''(w)\bigg| &\le&  \|h'\| + 9 \lambda\theta \|\Tilde{h}\| + 11 \lambda \|\Tilde{h}\|. \label{msbound4}
  \end{eqnarray}
and 
  \begin{equation} \label{msbound6}
    \frac{c(x)}{ c\left( x + \frac{\rho}{n}\right)} \le 
    \begin{cases}
    e^{\theta (e^{ \frac{\lambda}{n}} -1)}, & \rho = -1; \\
    e^{\frac{\lambda}{n}}, & 0 < \rho <1. 
    \end{cases}
\end{equation}
\end{lemma}
\begin{proof}
As in \eqref{mPEStein}, $cg = f$  is the solution of the Stein equation \eqref{steinPE} we use the bounds for $f$ to bound $g$. The bound \eqref{boundexp}  in  Lemma  \eqref{allboundsforPE} immediately gives \eqref{msbound1}. Also $c'(w) = c(w) \lambda (\theta e^{-\lambda w} - 1) $ so that $c'(w)g(w) = \lambda (\theta e^{-\lambda w} - 1) f(w)$; using \eqref{bound2} we get \eqref{msbound2}. Combining\eqref{msbound1}, \eqref{msbound2} and the triangle inequality we obtain \eqref{msbound5}.

Since $(cg)' = cg' + c'g$, rearranging and using \eqref{bound1}, \eqref{bound2} with the triangle inequality gives \eqref{msbound3}. For $(cg)'' - c''g - 2c'g' = cg''$, using \eqref{msbound3}, 
$$
    |c'(w)g'(w)| \le \lambda|\theta e^{-\lambda w} - 1|c(w)g'(w)  \le 3\lambda(\theta+1) \|\Tilde{h}\|.
$$
For $c''(w)g(w) = \lambda^2(\theta e^{-\lambda w} -1)^2 c(w)g(w) - \lambda^2\theta e^{-\lambda w}c(w)g(w)$, using the triangle inequality and \eqref{msbound2} and \eqref{msbound1} we get
$$
    |c''(w)g(w)| \le \lambda|\theta e^{-\lambda w} - 1|\|\Tilde{h}\| + \lambda \|\Tilde{h}\| \le \lambda\theta \|\Tilde{h}\| + 2\lambda \|\Tilde{h}\|.
$$
These two results along with \eqref{bound3} give  \eqref{msbound4}. Now for any $0 < \rho < 1$, 
$$ 
 \frac{c(x)}{ c\left( x + \frac{\rho}{n}\right)}
 = e^{\lambda \frac{\rho}{n} - \theta e^{-\lambda x} (1- e^{-\lambda \frac{\rho}{n}})  } \le e^{\lambda \frac{\rho}{n}};
$$
 for $\rho = -1$ we have 
$\frac{c(x)}{ c\left( x - \frac{1}{n}\right)}
 = e^{- \frac{\lambda}{n} + \theta e^{-\lambda x} (e^{ \frac{\lambda}{n}} -1)  } \le e^{ \theta (e^{\frac{\lambda }{n}} -1)}$, yielding \eqref{msbound6}.
\end{proof}

\begin{theorem} \label{PGD}
 Let $W_{G,n} \sim PG(\theta, p_n) $ with $p_n = \lambda/n;  0< \lambda < n$, and let $W \sim PE(\theta,\lambda)$.  Then 
 for the scaled Poisson-geometric random variable $Z_n = W_{G,n}/n$, and for any bounded function $h$ with bounded first derivative, we have
 \begin{align} \label{PGDbound}
     &{|\E h(Z_n) - \E h(W)| \le \frac{e^{\frac{\lambda}{n}}}{n} \left(1+\frac{e^{ \frac{\theta\lambda}{n e}}}{n}B_1(\theta, \lambda)\right)\|h'\|} \nonumber \\
     &+ \frac{1}{n} \left(\left(1-\frac{\lambda}{n}\right)^{-6} e^{\theta e^{ \frac{\lambda}{n}} + \frac{\theta\lambda}{n} + 2\frac{\theta\lambda}{n e} + \frac{\lambda}{n}}B_2(\theta, \lambda) + \frac{1}{2n} e^{\frac{\lambda}{n} + \frac{\theta\lambda}{n e}}B_3(\theta, \lambda)\right)\|\tilde{h}\| ,
 \end{align}
where 
\begin{eqnarray*}
    B_1(\theta, \lambda) &=&  \lambda \max(1,\theta) +  \frac{e^{\lambda \theta + {\frac{\theta}{e}}}}{2\theta\lambda }  + \frac{\theta\lambda}{2e}\frac{(1+e-e^{-\theta})}{(1-e^{-\theta})} ,\\
 B_2(\theta, \lambda) &=& 11 \lambda + 9 \lambda \theta +  6\lambda \max(1,\theta) +  \frac{3e^{\lambda \theta + {\frac{\theta}{e}}}}{\theta \lambda} + 3\theta\lambda \frac{(1+e-e^{-\theta})}{(1-e^{-\theta})e}  \nonumber \\
    &&  +  4\lambda e^{-\theta}\left(1+ 2 \max(1,\theta) + \frac{(3e + 1) \theta}{e}+ \frac{(5e + 3)\theta^2}{3e}  +  \frac{\theta(2\theta +1)}{(1-e^{-\theta})} \right), 
\end{eqnarray*}
and
\begin{eqnarray*}
    B_3(\theta, \lambda) =  (9 \lambda \theta  + 11 \lambda )  \left(2\lambda \max(1,\theta) +  \frac{e^{\lambda \theta + {\frac{\theta}{e}}}}{\theta \lambda} + \theta\lambda \frac{(1+e-e^{-\theta})}{(1-e^{-\theta})e} \right).
\end{eqnarray*}
\end{theorem}
\begin{remark}
\begin{enumerate} 
 \item For fixed $\lambda $ and $\theta$ the bound \eqref{PGDbound} is $O(n^{-1})$. As $n \rightarrow \infty$, for $\lambda = \lambda(n)$ and $\theta = \theta(n)$ the bound decreases to 0 as long as ${\lambda(n) \theta(n)}/{n} \rightarrow 0$ and ${\lambda(n)}/{n} \rightarrow 0$.
 \item 
  Equation \eqref{PGDbound} can be translated into a bound in the bounded Wasserstein distance using ${\rm Lip}_b(1)$ as the class of test functions.
  \end{enumerate}
 \end{remark}
 \begin{proof}
We employ Proposition \ref{prop:comp}. First we note that $ c(w) \rho(w) = c'(w)$ and that 
$ {\tilde \rho}_n (w) d
\left( w - \frac1n \right)
= d \left( w - \frac1n \right) - d \left( w - \frac2n \right)$.  
 Thus, for \eqref{disbound1},  with $h \in {\rm Lip}_b(1)$,  
\begin{align}
   &{|\E h(Z_n) - \E h(W)|} \nonumber \\
    \le &  \Big|
    \E [ n \Delta^{-n} (dg)(Z_n) -  (cg)'(Z_n) ] + \E\Big[ n {\tilde \rho}_n (Z_n) (dg)\left( Z_n - \frac1n \right) - (cg)\left( Z_n\right) \rho(Z_n)\Big] \Big| \nonumber \\
    \le& \E\left|
      n \left\{  g(Z_n)  - g \left( Z_n - \frac1n \right)\right\} c(Z_n) - g'(Z_n) c(Z_n) \right| \label{eq:new1}\\
    & + \E \left|
     n \left\{  g(Z_n)  - g \left( Z_n - \frac1n \right)\right\} \left(d(Z_n) - c(Z_n) -\frac{2}{n} c'(Z_n)\right) \right|\label{eq:new2}\\
    & +
    \E \left| d(Z_n) - d \left( Z_n - \frac2n \right)
    - \frac{2}{n}  c'(Z_n)\right| \left|ng\left( Z_n - \frac1n \right)\right| \label{eq:new3}. 
\end{align}
To bound the term \eqref{eq:new1}, for some $0 < \rho < 1$, we write
\begin{align*}
    \E\left|
     n \left\{  g(Z_n)  - g \left( Z_n - \frac1n \right)\right\} c(Z_n) - g'(Z_n) c(Z_n) \right| 
   & \le \frac1n \E \left|c(Z_n) g''\left( Z_n + \frac{\rho}{n}\right) \right|
   \\
   &\le \frac1n || c g''|| 
   \E \left| \frac{c(Z_n)}{ c\left( Z_n + \frac{\rho}{n}\right)} \right|.
\end{align*}
Then \eqref{msbound4} and \eqref{msbound6} gives the bound
\begin{align} \label{finbound_1}
    \E\left|
     n \left\{  g(Z_n)  - g \left( Z_n - n^{-1} 
     \right)\right\} c(Z_n) - g'(Z_n) c(Z_n) \right| \le \frac{e^{\frac{\lambda}{n}}}{n} \{ || h'|| + 9 \lambda \theta || {\tilde h}|| + 11 \lambda || {\tilde h}||\}. \nonumber \\ 
\end{align}

To bound \eqref{eq:new2}, we let $\tau(z) = e^{\lambda z + \theta e^{-\lambda z}}$, so that $\tau^{-1} g = \frac{n}{\theta\lambda}cg$ can be bounded as in Lemma \ref{lem:scaledbound}, and write 
\begin{eqnarray*}
    \lefteqn{\E \left|
      n \left\{  g(Z_n)  - g \left( Z_n - \frac1n \right)\right\} \left(d(Z_n) - c(Z_n) -\frac{2}{n} c'(Z_n)\right) \right|} \\
    && \quad \quad \quad \quad \quad \quad \quad = \E \left|
     l(Z_n)  n \left\{  g(Z_n)  - g \left( Z_n - \frac1n \right)\right\}\tau^{-1}(Z_n) \right|,
\end{eqnarray*}
where $l(Z_n) = \tau(Z_n)\left(d(Z_n) - c(Z_n) -\frac{2}{n} c'(Z_n)\right)$. We show in Appendix \ref{sec:proof} that
\begin{align} \label{eq:boundcd}
    |l(Z_n)| \le \frac{2\lambda^2 \theta}{n^2} \max(1,\theta) + \frac{1}{n^2} e^{\lambda \theta + {\frac{\theta}{e}}} + \frac{\theta^2\lambda^2}{n^2 } e^{ \frac{\theta\lambda}{n e}-1} + \frac{\theta\lambda^3Z_n}{n^2} e^{ \frac{\theta\lambda}{n e}}. 
\end{align}
 By Taylor expansion, for some $0 < \epsilon < 1$
\begin{eqnarray}
    n \left\{  g(Z_n)  - g \left( Z_n - \frac1n \right)\right\}\tau^{-1}(Z_n) &=  g'(Z_n)\tau^{-1}(Z_n)  + \frac{1}{2n}g''\left(Z_n + \frac{\epsilon}{n}\right)\tau^{-1}(Z_n)  \nonumber \\
    &= \frac{n}{\theta\lambda}\|cg'\|  + \frac{1}{2\theta\lambda} \|cg''\| \frac{c(Z_n)}{ c\left(Z_n + \frac{\epsilon}{n}\right)}. \label{eq:taylg}
\end{eqnarray}
Using \eqref{eq:boundcd} and \eqref{eq:taylg} along with \eqref{msbound3}, \eqref{msbound4} and \eqref{msbound6} yields
\begin{eqnarray}
   \lefteqn{\E \left|
     n \left\{  g(Z_n)  - g \left( Z_n - \frac1n \right)\right\} \left(d(Z_n) - c(Z_n) -\frac{2}{n}c'(Z_n)\right) \right|} \nonumber \\
     &\le&  \left(\frac{2\lambda^2 \theta}{n^2} \max(1,\theta) + \frac{1}{n^2} e^{\lambda \theta + {\frac{\theta}{e}}}+ \frac{\theta^2\lambda^2}{n^2 } e^{ \frac{\theta\lambda}{n e}-1} + \frac{\theta\lambda^3 \E Z_n}{n^2} e^{ \frac{\theta\lambda}{n e}}\right) \nonumber \\
     &&\left\{  \frac{n}{\theta\lambda}3\|\tilde{h}\|  + \frac{1}{2\theta\lambda} \{ || h'|| + 9 \lambda \theta || {\tilde h}|| + 11 \lambda || {\tilde h}||\} e^{\frac{\lambda}{n}} \right\} .
    \label{finbound_2}
\end{eqnarray}
Finally, to bound \eqref{eq:new3}, we show in Appendix \ref{sec:proof} that
\begin{eqnarray}\label{appendix_A}
    \lefteqn{\left|\frac{2}{n}  c'(z) - d(z) + d \left( z - \frac2n \right)\right|\left|ng\left( z - \frac1n\right)\right| } \\
    &\le& 2\lambda \left\{ \left(1-\frac{\lambda}{n}\right)^{-2} -1 \right\} \max(1,\theta) c(z)  \left| g\left( z - \frac1n\right)\right| \nonumber \\
    && + \frac{\theta\lambda^2}{n} \left(1-\frac{\lambda}{n}\right)^{-2}  e^{{\frac{\theta\lambda}{n e}}} \bigg\{ \frac{1}{3} \left(1-\frac{\lambda}{n}\right)^{-1} \nonumber  \left[\theta + \theta e^{\frac{\theta\lambda}{n}} + 6 e^{\frac{\theta\lambda}{n}} + 12 \left(1-\frac{\lambda}{n}\right)^{-1} 
    \right. 
    \\
    & & \left. \quad \quad \quad \quad \quad \quad \quad \quad \quad \quad + 8\theta \left(1-\frac{\lambda}{n}\right)^{-3}\right]  + 2\left({\frac{\theta}{e}}  + 2\lambda z \right)\bigg\}\nonumber  c(z) \Big| g\left( z - \frac1n\right)\Big| \\&& + \frac{2\lambda^2}{n}\Big( 1- \frac{\lambda}{n}\Big)^{-2}e^{{\frac{\theta\lambda}{n e}}}\Big[ \Big(1-\frac{\lambda}{n}\Big)^{-1}e^\frac{\theta \lambda}{n}  + \lambda z + {\frac{\theta}{e}} \Big] c(z) \left|g\left( z - \frac1n\right)\right|. 
    \nonumber 
\end{eqnarray}
 Note that
\begin{equation} \label{simp_1}
   0 \le \left(1-\frac{\lambda}{n}\right)^{-2} -1
   = \frac{\lambda}{n} \left(1-\frac{\lambda}{n}\right)^{-2}\left( 2 - \frac{\lambda}{n}  \right),
\end{equation}
and, using \eqref{msbound5} and \eqref{msbound6}, that
$$c(z) \left|g\left( z - \frac1n\right)\right|= \left|c(z) g\left( z - \frac1n\right)\right| \le \|cg\|\left| \frac{c(z)}{ c\left( z - \frac{1}{n}\right)} \right| \le \frac{2}{\lambda} e^{\theta (e^{ \frac{\lambda}{n}} -1)}\|\Tilde{h}\|.$$ 
Combining this with \eqref{appendix_A} and \eqref{simp_1} we bound \eqref{eq:new3} as
\begin{eqnarray} \label{finbound_3}
    \lefteqn{\E \left| d(Z_n) - d \left( Z_n - \frac2n \right)
    - \frac{2}{n}  c'(Z_n)\right| \left|ng\left( Z_n - \frac1n \right)\right|}  \\
    & \le & \frac{4 \lambda}{n}\max(1,\theta) \left(1-\frac{\lambda}{n}\right)^{-2}\left|\frac{\lambda}{n} - 2 \right|  e^{\theta (e^{ \frac{\lambda}{n}} -1)}\|\Tilde{h}\| + \frac{2\theta\lambda}{n} \left(1-\frac{\lambda}{n}\right)^{-2}  e^{{\frac{\theta\lambda}{n e}}} 
    e^{\theta (e^{ \frac{\lambda}{n}} -1)}\|\Tilde{h}\| \nonumber \\&& \bigg\{ \frac{1}{3} \left(1-\frac{\lambda}{n}\right)^{-1}  \left[\theta + \theta e^{\frac{\theta\lambda}{n}} + 6 e^{\frac{\theta\lambda}{n}} + 12 \left(1-\frac{\lambda}{n}\right)^{-1} + 8\theta \left(1-\frac{\lambda}{n}\right)^{-3}\right]  \nonumber \\
    &&+ 2\left({\frac{\theta}{e}}  + 2\lambda \E Z_n \right) + \frac{2}{\theta}
    \bigg[ \left(1-\frac{\lambda}{n}\right)^{-1}e^{\frac{\theta \lambda}{n}}  + \lambda \E Z_n + {\frac{\theta}{e}} \bigg]  \bigg\}. \nonumber  
\end{eqnarray}

To bound  $\E Z_n $, we argue as for \eqref{bound_mean_GPE}; $W_{G,n}  = \max \{ T_1, \ldots, T_M\} \le \sum_{i=1}^{M} T_i$ and so $  \E {W_{G,n}} \le \E   \sum_{i=1}^{M} T_i = \frac{n\theta}{\lambda(1-e^{-\theta})}$, giving  that 
 $  \E Z_n  = \frac{1}{n} \E W_{G,n}\le \frac{\theta}{\lambda(1-e^{-\theta})}.
$
Adding \eqref{finbound_1}, \eqref{finbound_2} and \eqref{finbound_3} and simplifying gives \eqref{PGDbound}.
\end{proof}

The next result uses instead Proposition \ref{prop:lindeberg} to bound the distance between a PG and a PE distribution.

\begin{corollary}\label{rem:lindeberg}
Let $W = W_1(M,{\bf{E}}) \sim PE(\theta, \lambda)$ and let ${W_{G,n}} = W_1(M,{\bf{G}}) \sim PG(\theta, \frac{\lambda}{n})$ with $G_i \sim \text{Geometric}(\lambda/n)$, be two CCR random variables, and let $Z_n = \frac{{W_{G,n}}}{n}$. Then, for all  bounded  Lipschitz functions $h: \R_{>0}^{+} \rightarrow \R$ 
\begin{equation} \label{bound_L_type} 
     \left| \EE h(W) - \EE h \left(Z_n \right) \right|  \le
     \frac{\theta}{1 - e^{-\theta}} \frac{2}{n}\left(\frac{n - e^{\frac{\lambda}{n}}(n-\lambda)}{\lambda(e^{\frac{\lambda}{n}}-1)}\right)\|h'\| .
\end{equation}
\end{corollary}
\begin{proof}
Using \eqref{prop2inequality} in Proposition \ref{prop:lindeberg} gives  
\begin{align*}
  \left| \EE h(W) - \EE h \left(Z_n \right) \right| = \left| \EE h(W) - \EE h \left(\frac{W_{G,n}}{n} \right) \right|  &\le \frac{\theta}{1 - e^{-\theta} } \E \left|E - \frac{G}{n}\right| || h'||.
\end{align*}
With the coupling $\Tilde{G} = \lceil n{E}\rceil \sim \text{Geometric}(1-e^{-\lambda/n})$, $\tilde{G}$ is stochastically greater or equal to $G$ and $\E |n{E} - G| \le \E |n{E}-\Tilde{G}| + \E (\Tilde{G} - G)$. Moreover,
\begin{align*}
    \E |nE - \Tilde{G}| = \sum_{k=0}^{\infty} \int^{\frac{k}{n}}_{\frac{(k-1)}{n}} \lambda e^{-\lambda x} \left(k - nx \right) dx = \frac{n - e^{\frac{\lambda}{n}}(n-\lambda)}{\lambda(e^{\frac{\lambda}{n}}-1)},
\end{align*}
and 
$ 
\E(\Tilde{G} - G)  = ({1-e^{-\frac{\lambda}{n}}})^{-1} - {n}/{\lambda}.$
Hence the bound \eqref{bound_L_type} follows.
\end{proof}

\begin{remark}
To compare \eqref{PGDbound} and \eqref{bound_L_type}, first note that as no fixed bound on $\| h\| $ is assumed, through re-scaling $h$ we can make $\|\Tilde{h}\|$ as small as desired. Therefore, for a comparison we focus on the terms involving $\|h'\|$. For continuous $\|h'\|$, \eqref{PGDbound} outperforms the bound \eqref{bound_L_type} for large $n$, since  
$ 
\underset{n \rightarrow \infty}{\lim} e^{\frac{\lambda}{n}} \left(1+\frac{e^{ \frac{\theta\lambda}{n e}}}{n}B_1(\theta, \lambda)\right) = 1$ and $\underset{n \rightarrow \infty}{\lim} \frac{\theta}{1 - e^{-\theta}} 2\left(\frac{n - e^{\frac{\lambda}{n}}(n-\lambda)}{\lambda(e^{\frac{\lambda}{n}}-1)}\right) = \frac{\theta}{(1-e^{-\theta})}.$ In particular, for any $n \ge n_0$  the r.h.s.\,of 
\eqref{bound_L_type} is larger than the coefficient of $\|h'\|$ in  the bound \eqref{PGDbound}, where $n_0 =n_0(\theta, \lambda)$  solves $ e^{\frac{\lambda}{n}}  \le \frac{2n\theta}{\lambda(1 - e^{-\theta})(e^{\frac{\lambda}{n}}-1)} \left(\frac{n - e^{\frac{\lambda}{n}}(n-\lambda)}{n+e^{ \frac{\theta\lambda}{n e}}B_1(\theta, \lambda)}\right).$  
Table \ref{tab:outperform} shows such values of $n_0$.

 \begin{table} [H]
        \centering
\caption{Values of $n$ above which the coefficient of $\|h'\|$  is smaller in \eqref{PGDbound}  than  in \eqref{bound_L_type}}
\label{tab:outperform}
        \begin{tabular}{|c|ccccccccc|llll|}
        \hline
         $\lambda$& 1 & 1 & 2 & 2 & 2 & 3 & 3 & 3  & 5   \\
         \hline
         $\theta$ & 1 & 2 & 1 & 2 & 3 & 2 & 3 & 5 & 3   \\
         \hline
            $n_0$ & 9 & 7 & 14 & 16 & 35 & 49 & 374 & 67,775 & 87,731  \\
         \hline
   \end{tabular}
    \end{table}
\end{remark}

\section{Application to the exponential-geometric distribution} \label{sec:EG} 

The {\it exponential geometric distribution} EG($\lambda, p$) introduced by \cite{ADAMIDIS199835} is the distribution of the minimum of $N$ i.i.d.\,random variables
${\bf E}= (E_1, E_2,\ldots)$ with $E_i \sim {\rm Exp}(\lambda), i \in \mathbb{N},$ and $N$ is a Geometric($p$) random variable, and independent of all $E_i$'s. We set $q=1-p$. An exponential geometric  random variable  $W = W_{-1}(N, {\bf{E}})$ is thus a CCR random variable. As $G_N(u) = \frac{up}{(1-qu)}$, $G'_N(u) = \frac{p}{(1-qu)^2}$ and $G''_N(u) = \frac{2pq}{(1-qu)^3}$, the pdf of $W \sim \text{EG}(\lambda, p)$ is
\begin{equation} \label{eg_pdf}
    f(w; \lambda, p) = \frac{\lambda p e^{-\lambda w}} {(1-qe^{-\lambda w})^2}, \quad w, \lambda
    >0, 0 \le p \le 1,  
\end{equation}
with score function
\begin{equation} \label{EG_score}
    \rho(w) = - \lambda \left(\frac{1+qe^{-\lambda w}}{1-qe^{-\lambda w}}\right).
\end{equation}
This gives the score Stein operator
\begin{equation}
    \T g(w) = g'(w) - \lambda \left(\frac{1+qe^{-\lambda w}}{1-qe^{-\lambda w}}\right)g(w),
\end{equation}
and the Stein equation
\begin{equation} \label{steinEG}
    g'(w) - \lambda \left(\frac{1+qe^{-\lambda w}}{1-qe^{-\lambda w}}\right)g(w) = h(w) - \EE h(W).
\end{equation}
The next Lemma bounds the solution \eqref{solution} of this EG Stein equation.

\begin{lemma} \label{allboundsforEG} 
For any bounded test function $h:  \R^+_{>0} \rightarrow \R$, we let $\Tilde{h}(w) = h(w)-\E h(W)$ for  $W \sim EG(\lambda,p)$. Then its solution $g=g_h$ of the EG Stein equation \eqref{steinEG} satisfies 
\begin{eqnarray}
    |g(w)| &\le& \frac{\|\tilde{h}\|}{\lambda},  \label{boundgx} \\
    |g'(w)| &\le&  \frac{2\|\tilde{h}\|}{p}. \label{bound_dgx}
\end{eqnarray}

\end{lemma}
\begin{proof}
    For the solution \eqref{solution} of the EG Stein equation \eqref{steinEG} we have 
    \begin{eqnarray*}
    |g(w)| \le \|\tilde{h}\| \left| \frac{ -1 }{f(w)} \int_w^{\infty} f(t)\mathrm{d}t \right|
    = \|\tilde{h}\| \left| \frac{ 1-F(w)}{f(w)} \right|
     = \|\tilde{h}\| \left| \frac{(1-qe^{-\lambda w})}{\lambda} \right| \le  \frac{\|\tilde{h}\|}{\lambda}.
\end{eqnarray*}
The last inequality follows since with $0 \le e^{-\lambda w} \le 1$ and $0 \le q \le 1$, we have $1 -q \le 1 - qe^{-\lambda w} \le 1$, and $1 \le 1 + qe^{-\lambda w} \le 1+q$. For a bound on  $| g'(w)| $ we use the Stein equation \eqref{steinEG} and the triangle inequality to obtain 
\begin{eqnarray*}
     |g'(w)| \le \|\tilde{h}\| + \left| \lambda \left(\frac{1+qe^{-\lambda w}}{1-qe^{-\lambda w}}\right)\right| \frac{\|\tilde{h}\|}{\lambda}
      \le \|\tilde{h}\| \left(1 + \frac{1+q}{1-q}\right).
\end{eqnarray*}
Simplifying the last inequality gives \eqref{bound_dgx}.
\end{proof}

\subsection{The minimum of a geometric number of i.i.d.\,random variables} \label{sec:approxrandmin}
Let $N\in \N$ a random variable which is independent of ${\bf{X}} = (X_1, X_2, \ldots)$, a sequence of i.i.d.\,random variables, and let $W=W_{-1}(N, {\bf{X}})$ have pdf of the form \eqref{CCR_class} for $\alpha = -1$. In this section we approximate its distribution by an EG distribution. First we employ Proposition \ref{prop:general}.

 \begin{corollary} \label{prop:eg_min}
 Assume that the $X_i's$ have cdf  $F_X$, differentiable pdf $p_X$,  and score function $\rho_X$ and let $N \sim \text{Geometric}(p)$. Then
\begin{eqnarray*}
     \lefteqn{d_{TV}(\text{EG}(\lambda,p),\mathcal{L}(W))\le \frac{2}{\lambda}\left| \EE ( -\lambda- \rho_X(W))\right.} \\
      && \left. \quad \quad \quad \quad \quad \quad \quad \quad \quad - \EE \left( \frac{2(1-p)\lambda e^{-\lambda W}}{1-(1-p)e^{-\lambda W}} -  \frac{2(1-p)f_X(W)}{1-(1-p)(1-F_X(W))} \right)  \right|.
\end{eqnarray*}
\end{corollary}
\begin{proof}
Substituting $\frac{G_N''(1-F(w))}{G_N'(1-F(w))} = \frac{2(1-p)f(w)}{1-(1-p)(1-F(w))}$ 
and the score function of the exponential distribution in \eqref{eq:gencomp}, taking $h$ an indicator function so that  $|| \tilde{h} || \le 2 ||h|| \le 2$ and using \eqref{boundgx} gives the bound.
\end{proof}

Next, we instead use Proposition \ref{prop:lindeberg}; the corollary follows immediately from \eqref{prop2inequality}.
\begin{corollary}
For $W = W_{-1}(N, {\bf{X}})$ with $N \sim \text{Geometric}(p)$,
$$ d_{BW}(EG(\lambda,p), \mathcal{L}(W) \le {\E | E - X|}/{p} . $$
\end{corollary}

\subsection{Approximating the extended
exponential-geometric distribution} \label{EGtoEEG}

Motivated by population heterogeneity, Adamidis et al. \cite{ADAMIDIS2005259} developed the extended exponential-geometric distribution (EEG) by assuming that individual units in a population have increasing failure rates that depend on a random scale parameter $A$. Their lifetimes $X/A$  are modelled by a modified extreme value distribution; if $A = \alpha$ then the pdf is $f(x|\alpha;\beta) = \alpha \beta e^{\beta x + \alpha(1-e^{\beta x})}$, where $x, \alpha, \beta \in \R^+_{>0}$; it is assumed that $A$ has ${\rm Exp}(\gamma)$ distribution. Then the unconditional lifetime distribution  $X$ has pdf 
\begin{equation} \label{pdf_eeg}
    f(x;\beta, \gamma) =   \frac{\beta \gamma e^{-\beta x}} {(1- (1-\gamma) e^{-\beta x})^2}, \quad x>0;
\end{equation}
with $\beta, \gamma \in \R^+_{>0}$;  we use the notation $X \sim EEG(\beta, \gamma)$. Its 
 score function is 
\begin{equation} \label{EEG_score}
    \rho(x) = \frac{f'(x)}{f(x)} = - \beta \left(\frac{1+(1-\gamma)e^{-\beta x}}{1-(1-\gamma)e^{-\beta x}}\right).
\end{equation}
This distribution is not in the CCR family. However, the exponential-geometric distribution $EG(\beta, \gamma)$ is a special case when $\gamma \in (0,1)$. To assess their total variation distance, we use the general methodology developed in Section \ref{sec:gen_Stein}.

\bigskip
\begin{theorem} \label{theo:EG_EEG}
    For $X \sim EEG(\beta, \gamma)$ and $W \sim EG(\lambda, p)$ with $\beta, \gamma, \lambda \in \R^+_{>0}$, $p \in (0,1)$ and a bounded test function $h$, we have
    \begin{align} \label{bound_EG_EEG}
    |\E h(X)-\E h(W)| \le \begin{cases}
        \frac{\|\tilde{h}\|}{\lambda}\left[| \lambda - \beta| \left( \frac{2-p}{p} + \frac{2(1-\gamma)}{e  \gamma^2} \right) + \frac{2\beta} {\min(\gamma, p)^2} | p - \gamma|  \right] &, 0 < \gamma <1 ;\\
        \frac{\|\tilde{h}\|}{\lambda p } \left[\left|\lambda - \beta\right|\left(1+(\gamma-1)(1-p)\right) + (\lambda+\beta)\left(\gamma -p \right)\right] & , \gamma \ge 1.
    \end{cases}
\end{align}
\end{theorem} 
\begin{proof}
Using the score functions \eqref{EG_score} and \eqref{EEG_score} in \eqref{comparison} yields
\begin{eqnarray}
    \lefteqn{|\E h(X)-\E h(W)| = |\E(\rho_X(X)-\rho_Y(X))g(X)|} \nonumber \\
    &=&  \left|\E \left(- \beta \left(\frac{1+(1-\gamma)e^{-\beta X}}{1-(1-\gamma)e^{-\beta X}}\right)+ \lambda \left(\frac{1+ (1-p)e^{-\lambda X}}{1- (1-p)e^{-\lambda X}}\right)\right)g(X)\right| \nonumber \\
   &\le& \frac{\|\tilde{h}\|}{\lambda} \E \left| - \beta \left(\frac{1+(1-\gamma)e^{-\beta X}}{1-(1-\gamma)e^{-\beta X}}\right)+ \lambda \left(\frac{1+ (1-p)e^{-\lambda X}}{1- (1-p)e^{-\lambda X}}\right)\right|. \label{diff_EG_EEG}
\end{eqnarray}
To bound the expectation in the above equation, we let 
$$R(x) = - \beta \left(\frac{1+(1-\gamma)e^{-\beta x}}{1-(1-\gamma)e^{-\beta x}}\right)+ \lambda \left(\frac{1+ (1-p)e^{-\lambda x}}{1- (1-p)e^{-\lambda x}}\right)$$

\textbf{Case 1:}  $0 < \gamma <1$. In this case we decompose $|R(x)|$ as
\begin{eqnarray}
     \lefteqn{|R(x)| \le  \left|(\lambda - \beta) 
   \frac{1+ (1-p)e^{-\lambda x}}{1- (1-p)e^{-\lambda x}}\right|} 
   \label{eq:term1}
   \\
   &+&  \left|\beta \left( \frac{1+ (1-p)e^{-\lambda x}}{1- (1-p)e^{-\lambda x}} - \frac{1+ (1-\gamma)e^{-\lambda x}}{1- (1-\gamma)e^{-\lambda x}}\right) \right| \label{eq:term2}
   \\
   &+&  \left|\beta \left( \frac{1+ (1-\gamma)e^{-\lambda x}}{1- (1-\gamma)e^{-\lambda x}} - \frac{1+ (1-\gamma)e^{-\beta x}}{1- (1-\gamma)e^{-\beta x}}\right) \right|, \label{eq:term3}
\end{eqnarray}
and bound these terms separately.
For \eqref{eq:term1} we use that 
$ \frac{1 + \alpha q}{1 - \alpha q} \le \frac{1+q}{1-q}$ when $\alpha \in [0,1]$ and $q \in (0,1).$ Thus
\begin{eqnarray*}
    \left\vert (\lambda - \beta) 
   \frac{1+ (1-p)e^{-\lambda x}}{1- (1-p)e^{-\lambda x}}\right\vert
   &\le& | \lambda - \beta| \frac{2-p}{p}. 
\end{eqnarray*}

For \eqref{eq:term2}, Taylor expansion around $1-\gamma$ of the function 
$f(q) = \frac{1+aq }{1-aq} $ with $a = e^{-\lambda x} \in (0,1)$, and $q=1-p$, gives
$f'(q) = \frac{a}{1 - aq} + \frac{a(1+aq)}{(1-aq)^2} = \frac{2a}{(1-aq)^2}>0$. Moreover, 
for $0< a< 1$ we have 
$  \frac{2a}{(1-aq)^2} \le \frac{2}{(1-q)^2}$ and thus,  for $\theta \in (0,1),$ we have 
$0< f'(\theta (1-p) + (1- \theta) (1-\gamma)) \le \frac{2}{(1- \max(1-\gamma, 1-p))^2} = \frac{2}{(\min (\gamma, p))^2}$.
Hence, 
\begin{eqnarray*}
   \left\vert  \beta \left( \frac{1+ (1-p)e^{-\lambda x}}{1- (1-p)e^{-\lambda x}} - \frac{1+ (1-\gamma)e^{-\lambda x}}{1- (1-\gamma)e^{-\lambda x}}\right)\right\vert &\le& 
   \beta  \frac{2| p - \gamma|}{(\min (\gamma, p))^2}.
\end{eqnarray*}
For \eqref{eq:term3}, first order Taylor expansion of the function
$f(\beta) = \frac{1+(1-\gamma) e^{-\beta x}}{1-(1-\gamma) e^{-\beta x}} $ gives 
\begin{eqnarray*}
\left\vert \beta \left( \frac{1+ (1-\gamma)e^{-\lambda x}}{1- (1-\gamma)e^{-\lambda x}} - \frac{1+ (1-\gamma)e^{-\beta x}}{1- (1-\gamma)e^{-\beta x}}\right) \right\vert 
&\le& \beta | \lambda - \beta| | f'(\theta \lambda + (1-\theta) \beta)| 
\end{eqnarray*}
for some $\theta \in (0,1).$
Now the function $f'(\beta) = \frac{{2 x (1-\gamma)} e^{-\beta x}}{(1 - (1-\gamma) e^{-\beta x})^2}$ is positive; moreover $x e^{-\beta x} \le (e \beta)^{-1}$ for $x \ge 0$, so that
$f'(\beta) \le \frac{1}{\beta e} \frac{2(1-\gamma)}{ (1 - (1-\gamma) e^{-\beta x})^2} \le \frac{2(1-\gamma)}{e \beta \gamma^2}.$ 
Hence we can bound 
\begin{eqnarray*}
\left\vert \beta \left( \frac{1+ (1-\gamma)e^{-\lambda x}}{1- (1-\gamma)e^{-\lambda x}} - \frac{1+ (1-\gamma)e^{-\beta x}}{1- (1-\gamma)e^{-\beta x}}\right) \right\vert 
&\le&  \frac{2(1-\gamma)}{e  \gamma^2} | \lambda - \beta|
.  
\end{eqnarray*}
This gives as overall bound
\begin{align*}
   |  R(x) | &\le | \lambda - \beta| \left( \frac{2-p}{p} + \frac{2(1-\gamma)}{e  \gamma^2} \right) + \frac{2\beta} {\min(\gamma, p)^2} | p - \gamma| .
\end{align*}
Substituting this bound in \eqref{diff_EG_EEG} gives the final bound for the case when $0 < \gamma <1$.

\textbf{Case 2:} For $\gamma \ge 1$, we have 
\begin{eqnarray*}
    \lefteqn{|R(x)| \le \left|\frac{(\lambda - \beta)(1-(1-\gamma)(1-p)e^{-(\lambda+\beta)x})}{\left(1-(1-\gamma)e^{-\beta x}\right) \left(1- (1-p)e^{-\lambda x}\right)}\right|} 
    \\ 
    &&+ \left|\frac{(\lambda+\beta)\left((1-p)e^{-\lambda x} - (1-\gamma) e^{-\beta x}\right)}{\left(1-(1-\gamma)e^{-\beta x}\right) \left(1- (1-p)e^{-\lambda x}\right)}\right|
    \\
    &\le& \frac{\left|\lambda - \beta\right|\left(1+(\gamma-1)(1-p)\right)}{p} + \frac{(\lambda+\beta)\left(\gamma -p \right)}{p}.
\end{eqnarray*}
Substituting this result in \eqref{diff_EG_EEG} gives the bound for $\gamma \ge 1$ in \eqref{bound_EG_EEG}.
\end{proof}

\begin{remark}
\begin{enumerate}
    \item The bounds are not optimised for numerical value,  but for $\beta = \lambda$ and $\gamma = p$, \eqref{bound_EG_EEG} reduces to zero, as it should.
    \item For $\gamma \in (0,1)$, \eqref{bound_EG_EEG} gives a bound on the distance between an EG$(\beta, \gamma)$ and an EG$(\lambda, p)$ distribution.
\end{enumerate}
\end{remark}

\section{The maximum waiting time of sequence patterns in Bernoulli trials} \label{sec:patterns}

This application is motivated by the results in Section 4 of \cite{pekoz1996stein} which gives bounds on the distribution of the number of trials preceding the first appearance of a pattern in dependent Bernoulli trials. Here we are interested in the distribution of the maximum of such random variables. 

Consider $M$ independent parallel systems $(X_1^{(i)},X_2^{(i)},...)$, $i=1,2,\ldots ,M$ of possibly dependent Bernoulli$(a)$ trials, which are jointly independent of $M \in \N $. For each sequence $i$, let $I_j^{(i)}$ be the indicator function that a fixed non-overlapping binary sequence pattern of length $k$ occurs starting at $X_j^{(i)}$; the pattern may be specific to sequence $i$. Let $V_i =\min\{j: I_j^{(i)}=1\}$ denote the first occurrence of the pattern of interest in the $i^{th}$ system; assume that $\PP(V_i = 1) = p$ for all $i \in \N $.  We denote the maximum waiting time for the occurrence of a corresponding sequence pattern in all $M$ parallel systems by $W = W_1(M,{\bf{V}})$ where ${\bf{V}} = (V_1, V_2 \ldots)$.

\begin{example}
If the Bernoulli trials are independent, and the pattern of interest is a run of 1's of length $k$, starting with a $0$ and followed by $k$ 1's, then $\PP(V_i = 1) = p = (1-a)a^k$, as given in Corollary 2 of \cite{pekoz1996stein}.
Intuitively, the waiting time for the occurrence of this pattern in an individual sequence is approximately geometric with parameter $p$.
For an approximation by a Poisson-exponential distribution 
we are particularly interested in instances where we can write the probability $\PP(V_i = 1) = p$ as $p = \lambda/n$; here this leads to scaling the run length $k$ as $k \sim a \log n$.

\end{example}

\begin{corollary} \label{cor:seq_patt}
In the above setting, let $U_n = W/n$, with $p =\frac{\lambda}{n}$, and $W_1(M', {\bf{E}}) \sim PE(\theta, \lambda)$, with $M'$, a zero-truncated Poisson random variable and $E_i \sim {\rm Exp}(\lambda)$, we have
\begin{eqnarray*} 
    \lefteqn{d_{BW}(\mathcal{L}(U_n), PE(\theta, \lambda))} \\ 
    &\le&  \frac{2\lambda(k-1)}{n}\EE M + \frac{1}{\ln(p^{-1})} \E \sum_{k= \min(M,M')}^{\max(M,M')}\frac1k
    + \frac{e^{\frac{\lambda}{n}}}{n} \left(1+\frac{e^{ \frac{\theta\lambda}{n e}}}{n}B_1(\theta, \lambda)\right) \nonumber \\
     &+& \frac{1}{n} \left(\left(1-\frac{\lambda}{n}\right)^{-6} e^{\theta e^{ \frac{\lambda}{n}} + 2\frac{\theta\lambda}{n} + \frac{\theta\lambda}{n e} + \frac{\lambda}{n}}B_2(\theta, \lambda) + \frac{1}{2n} e^{\frac{\lambda}{n} + \frac{\theta\lambda}{n e}}B_3(\theta, \lambda)\right),
\end{eqnarray*}
where $B_1(\theta, \lambda)$, $B_2(\theta, \lambda)$, and $B_3(\theta, \lambda)$ are as given in Theorem \ref{PGD}.
\end{corollary}

\begin{example}
    In the above example of independent Bernoulli trials and the pattern of interest being a run of 1's of length $k \sim a \log n$, the bound in  Corollary \ref{cor:seq_patt} is of order $(\log n)^{-1}.$ 
\end{example}

\begin{proof}
We couple $U_n = \frac1n \max\{V_1, \cdots, V_M\}$ and $Z_n = \frac1n \max\{T_1, \cdots, T_M\}$, where $T_i \sim \text{Geometric}(\frac{\lambda}{n})$ for $i = 1, \ldots, M$, by using the same random variable $M$. Then 
\begin{equation} \label{sq1}
    d_{BW}(\mathcal{L}(U_n), \mathcal{L}(PE(\theta, \lambda))) \le  d_{BW}(\mathcal{L}(U_n), \mathcal{L}(Z_n)) + d_{BW}(\mathcal{L}(Z_n), \mathcal{L} (PE(\theta, \lambda))).
\end{equation}
Taking a union bound,
\begin{align*}
    d_{TV}(\mathcal{L}(U_n), \mathcal{L}(Z_n)) & \le \sum_{m=1}^{\infty} \sum_{i=1}^{m} 
{d_{TV}\left(\mathcal{L}\left(\frac{V{_i}}{n}\right), \mathcal{L}\left(\frac{T{_i}}{n}\right) \right)} \, 
\mathbb{P}(M=m)\le \frac{\lambda(k-1)}{n} \E M,
\end{align*}
using Corollary 1 in \cite{pekoz1996stein} in the last step. With \eqref{eq:dtv},
\begin{equation} \label{dis_BW1}
    d_{BW}(\mathcal{L}(U_n), \mathcal{L}(Z_n))\le  2d_{TV}(\mathcal{L}(U_n), \mathcal{L}(Z_n)) \le \frac{2\lambda(k-1)}{n}\EE M.
\end{equation}
Now 
\begin{eqnarray}
    d_{BW}\left(\mathcal{L}(Z_n), PE(\theta, \lambda)\right) &\le& d_{BW}\left(\mathcal{L}(Z_n), \mathcal{L}\left(\frac{W_1(M', {\bf{T}})}{n}\right)\right) \label{eq:firstterm}  \\
    && + d_{BW}\left(\mathcal{L}\left(\frac{W_1(M', {\bf{T}})}{n}\right), PE(\theta, \lambda)\right). \label{eq:secondterm}
\end{eqnarray}
The term \eqref{eq:secondterm} is bounded in \eqref{PGDbound}. To bound \eqref{eq:firstterm}
\eqref{disBW_1_exp} gives 
\begin{eqnarray*}
    d_{BW}\left(\mathcal{L}(Z_n), \mathcal{L}\left(\frac{W_1(M', {\bf{T}})}{n}\right)\right) &\le &\sum_{m,m'=1}^\infty \PP (M=m, M'=m') \\
    && \E \left| W_1(m', {\bf{T}}) -
     W_1(m, {\bf{T}})\right|. 
\end{eqnarray*}
The expectation of the maximum of $m$ Geometric$(p)$  variables satisfies   
$$ \frac{1}{\ln(p^{-1})} \sum_{k=1}^m\frac1k \le \E W_1(m, {\bf{T}}) \le 1 + \frac{1}{\ln(p^{-1})} \sum_{k=1}^m\frac1k,$$
as given on page 136 of \cite{EISENBERG2008135}. Hence 
\begin{eqnarray*}
   \lefteqn{d_{BW}\left(\mathcal{L}(Z_n), \mathcal{L}\left(\frac{W_1(M', {\bf{T}})}{n}\right)\right) } \\
   &\le&  \sum_{m,m'=1}^\infty \PP (M=m, M'=m') \frac{1}{\ln(p^{-1})} \sum_{k= \min(m,m')}^{\max(m,m')}\frac1k \\
    &=&  \frac{1}{\ln(p^{-1})} \left| \E \sum_{k= 1}^{M}\frac1k - \E \sum_{k= 1}^{M'}\frac1k \right|.
\end{eqnarray*}
Combining this result with \eqref{PGDbound} and \eqref{dis_BW1} in \eqref{sq1} we obtain the assertion.
\end{proof}
\begin{remark}
For $M=M'$ the bound in Corollary \ref{cor:seq_patt} reduces to
    \begin{eqnarray*} 
    \lefteqn{ d_{BW}(\mathcal{L}(U_n), PE(\theta, \lambda)) \le \frac{2\theta\lambda(k-1)}{n(1-e^{-\theta})} + \frac{e^{\frac{\lambda}{n}}}{n} \left(1+\frac{e^{ \frac{\theta\lambda}{n e}}}{n}B_1(\theta, \lambda)\right)} \\
     &+& \frac{1}{n} \left(\left(1-\frac{\lambda}{n}\right)^{-6} e^{\theta e^{ \frac{\lambda}{n}} + 2\frac{\theta\lambda}{n} + \frac{\theta\lambda}{n e} + \frac{\lambda}{n}}B_2(\theta, \lambda) + \frac{1}{2n} e^{\frac{\lambda}{n} + \frac{\theta\lambda}{n e}}B_3(\theta, \lambda)\right).  
\end{eqnarray*}
\end{remark}
The assumption of i.i.d.\,sequences can be weakened to that of a Markov chain by applying Theorem 5.5 in \cite{reinert2000probabilistic}, with $M=M'$. This theorem gives a Poisson process approximation for the number of ``declumped'' counts of each pattern, which in turn yields that the waiting time for each pattern to occur is approximately exponentially distributed. The theorem also gives an explicit bound on the approximation, but requires considerable notation, and hence we do not pursue it here.


{\bf Acknowledgements.} We thank  Christina Goldschmidt, David Steinsaltz and  Tadas Temcinas for 
helpful discussions. We would also like to thank the anonymous reviewers for their suggestions which have led to an overall improved paper.

{\bf Funding information.} AF is supported by the Commonwealth Scholarship Commission, United Kingdom, and in part by EPSRC grant EP/X002195/1. 
GR is supported in part by EPSRC grants EP/T018445/1, EP/R018472/1, EP/X002195/1 and  EP/Y028872/1.

{\bf Competing interests}  There were no competing interests to declare which arose during the preparation or publication process of this article.

\bibliographystyle{APT}

\bibliography{bibliography}

\bigskip
\appendix
\section{Further proofs} \label{sec:proof}

\medskip \noindent 
{\bf Proof of  \eqref{bound_mean_GPE}.}

For $X \sim \text{GPE}(\theta, \lambda, \beta)$ with $\lambda,\theta  >0$ and $\beta \ge 1$, we have  
$(1-e^{-\theta + \theta e^{-\lambda x}})^{\beta-1} \le 1$ and hence 
\begin{align*}
     \E(X)  &= \frac{\beta \theta \lambda }{(1 - e^{-\theta})^{\beta}} \int_0^{\infty} x e^{-\lambda x - \theta \beta e^{-\lambda x}} (1-e^{-\theta + \theta e^{-\lambda x}})^{\beta -1}\mathrm{d}x \\
     &\le \frac{\beta \theta \lambda}{(1 - e^{-\theta})^{\beta}}  \int_0^{\infty} x e^{ -\lambda x }\mathrm{d}x. 
\end{align*}
$\hfill \Box$

\bigskip \noindent 
{\bf {Proof of inequality \eqref{eq:boundcd}.}} 

For the proof we first note that, for $n \in \N$ and $|x| < n$ we have 
\begin{equation}
    e^x\left(1-\frac{x^2}{n}\right) \le \left(1+\frac{x}{n}\right)^n\mbox{ and  } 0 \le e^x - \left( 1 + \frac{x}{n}\right)^n \le \frac{x^2}{n} e^x, \label{eq:useful1}
\end{equation} 
and $x e^{-x} \le e^{-1}$ for $x > 0$.
Hence, for $0 < \lambda < n$ and $z > 0$, 
   \begin{align}
  0 \le e^{-\lambda z} 
      - \left(1-\frac{\lambda z }{n z } \right)^{nz} \le \frac{ (\lambda z)^2}{nz} e^{-\lambda z} &= \frac{ \lambda^2 z}{n} e^{-\lambda z} \label{eq:help2_1} \\
      &\le \frac{ \lambda}{ne}  \label{eq:help2}
 \end{align}
Also, we can write $e^{-\theta \left(1-\frac{\lambda}{n}\right)^{nz}} 
= e^{\theta \left[ e^{-\lambda z}- \left(1-\frac{\lambda}{n}\right)^{nz}\right]} e^{-\theta e^{-\lambda z}}$, and so
   \begin{equation} \label{exponent_new}
    e^{-\theta \left(1-\frac{\lambda}{n}\right)^{nz}} \le e^{ \frac{\theta\lambda}{ n e}} e^{-\theta e^{-\lambda z}} .
\end{equation}

Now, to bound $l = \tau (d - c - \frac1n c')$, we have 
$$\tau(z) c(z) = e^{\lambda z + \theta e^{-\lambda z}} c(z) = \frac{\lambda \theta}{n}, \quad \quad\quad
\frac{2}{n} \tau(z)c'(z) = \frac{2\lambda^2 \theta}{n^2} \left(\theta e^{-\lambda z} -1\right),$$
and
\begin{align*}
   e^{\lambda z + \theta e^{-\lambda z}} d(z) 
   =& e^{\lambda z + \theta e^{-\lambda z} - \theta\left( 1 - \frac{\lambda}{n}\right)^{nz}} \left( e^{\frac{\lambda \theta}{n} \left( 1 - \frac{\lambda}{n}\right)^{nz}} - 1 \right) \\
   =& e^{\lambda z + \theta e^{-\lambda z} - \theta\left( 1 - \frac{\lambda}{n}\right)^{nz}} \left( e^{\frac{\lambda \theta}{n} \left( 1 - \frac{\lambda}{n}\right)^{nz}} - 1  - \frac{\lambda \theta}{n}\left( 1 - \frac{\lambda}{n}\right)^{nz} \right)\\
   &+ e^{\lambda z + \theta e^{-\lambda z} - \theta\left( 1 - \frac{\lambda}{n}\right)^{nz}}\frac{\lambda \theta}{n}\left( 1 - \frac{\lambda}{n}\right)^{nz}.
\end{align*}
Thus, 
\begin{eqnarray}
l(z) &=& \frac{2\lambda^2 \theta}{n^2} \left(\theta e^{-\lambda z} -1\right) \nonumber \\
    &&+ e^{\lambda z + \theta \left(e^{-\lambda z} - \left( 1 - \frac{\lambda}{n}\right)^{nz}\right)} \left( e^{\frac{\lambda \theta}{n} \left( 1 - \frac{\lambda}{n}\right)^{nz}} - 1  - \frac{\lambda \theta}{n}\left( 1 - \frac{\lambda}{n}\right)^{nz} \theta\right) \label{eq:decterm1} \\
    &&+ \frac{\lambda \theta}{n} e^{\lambda z + \theta e^{-\lambda z}}\left( e^{ - \theta\left( 1 - \frac{\lambda}{n}\right)^{nz}}\left( 1 - \frac{\lambda}{n}\right)^{nz} - e^{-\lambda z - \theta e^{-\lambda z}}\right). \label{eq:decterm2}
\end{eqnarray}
Here
\begin{equation} \label{eq:help1}
    \frac{2\lambda^2 \theta}{n^2} \left(\theta e^{-\lambda z} -1\right) \le \frac{2\lambda^2 \theta}{n^2}\max(1,\theta).
\end{equation}

To bound \eqref{eq:decterm1}, we use \eqref{eq:help2} and series expansion, recalling $0 < \lambda < n$, to get 
\begin{align}
  & e^{\lambda z + \theta e^{-\lambda z} - \left( 1 - \frac{\lambda}{n}\right)^{nz}\theta} \left( e^{\frac{\lambda \theta}{n} \left( 1 - \frac{\lambda}{n}\right)^{nz}} - 1  - \frac{\lambda \theta}{n}\left( 1 - \frac{\lambda}{n}\right)^{nz} \theta\right)\\& 
  \le e^{\lambda z + {\frac{\theta \lambda}{n e}}}  \sum_{k=2}^\infty
 \frac{(\lambda \theta \left( 1 - \frac{\lambda}{n}\right)^{nz})^k}
 {n^k k!} \nonumber \\
  &\le e^{\lambda z + {\frac{\theta \lambda}{n e}}}  \sum_{k=2}^\infty
 \frac{(\lambda \theta)^k  e^{-\lambda z k} }
 {n^k k!}
 \le \frac{1}{n^2} e^{-\lambda z + {\frac{\theta\lambda}{n e}} + \lambda \theta}
  \le \frac{1}{n^2} e^{{\frac{\theta}{e}} + \lambda \theta}. \label{eq:help3}
\end{align}

To bound \eqref{eq:decterm2}, we first bound 
\begin{eqnarray}
    \lefteqn{e^{-\lambda z - \theta e^{-\lambda z}} - e^{ - \theta\left( 1 - \frac{\lambda}{n}\right)^{nz}}\left( 1 - \frac{\lambda}{n}\right)^{nz}} \nonumber \\
    &=&   \left( e^{-\lambda z} -  \left(1-\frac{\lambda}{n}\right)^{nz} \right) e^{-\theta \left(1-\frac{\lambda}{n}\right)^{nz}} + e^{-\lambda z -\theta e^{-\lambda z}} \left(1 - e^{\theta \{e^{-\lambda z} - \left(1-\frac{\lambda}{n}\right)^{nz}\} }\right) \nonumber \\
     &\le&  \left( e^{- \lambda z} -  \left(1-\frac{\lambda}{n}\right)^{nz} \right) e^{-\theta \left(1-\frac{\lambda}{n}\right)^{nz}} \nonumber \\
     && + \theta e^{- \lambda z -\theta e^{-\lambda z}}  \left(e^{-\lambda z} - \left(1-\frac{\lambda}{n}\right)^{nz}\right) e^{\theta \{e^{-\lambda z} - \left(1-\frac{\lambda}{n}\right)^{nz}\}} \nonumber \\
     &\le& \frac{\lambda^2 z}{n} e^{ \frac{\theta\lambda}{ n e}} e^{-\lambda z -\theta e^{-\lambda z}} +   \frac{\theta\lambda}{n e} e^{ \frac{\theta\lambda}{n e}} e^{- \lambda z -\theta e^{-\lambda z}}. \label{new_expon}
\end{eqnarray}
Here we used Property 4 in \cite{salas2012exponential}: for all $x>0$ we have
 $ \left(1+\frac{x}{n}\right)^n - 1 \le xe^{x} $ and $ e^x - 1 \le x e^x$ along with \eqref{eq:help2_1} and \eqref{exponent_new}. Thus
\begin{equation}
    \frac{\lambda \theta}{n} e^{\lambda z + \theta e^{-\lambda z}}\left( e^{ - \theta\left( 1 - \frac{\lambda}{n}\right)^{nz}}\left( 1 - \frac{\lambda}{n}\right)^{nz} - e^{-\lambda z - \theta e^{-\lambda z}}\right) \le \frac{\theta\lambda^3z}{n^2} e^{ \frac{\theta\lambda}{n e}} +   \frac{\theta^2\lambda^2}{n^2 } e^{ \frac{\theta\lambda}{n e}-1}. \label{eq:help4}
\end{equation} 
Combining \eqref{eq:help1}, \eqref{eq:help3} and \eqref{eq:help4} we get
\begin{eqnarray*}
   l(z) \le  \frac{2\lambda^2 \theta}{n^2}\max(1,\theta) + \frac{1}{n^2} e^{\lambda \theta + {\frac{\theta}{e}}} + \frac{\theta^2\lambda^2}{n^2 } e^{ \frac{\theta\lambda}{n e}-1} + \frac{\theta\lambda^3z}{n^2} e^{ \frac{\theta\lambda}{n e}}.
\end{eqnarray*}
Replacing $z$ by $Z_n$ gives \eqref{eq:boundcd}. $\hfill \Box$

\bigskip \noindent 
{\bf {Bounding \eqref{appendix_A}.}}
For $z>0$,
\begin{eqnarray}
\lefteqn{2\frac{\lambda^2 \theta}{n^2}e^{-\lambda z - \theta e^{-\lambda z}}(\theta e^{-\lambda z}-1) } \nonumber\\
  &&- \left(e^{-\theta(1-\frac{\lambda}{n})^{nz+1}}- e^{-\theta(1-\frac{\lambda}{n})^{nz}}- e^{-\theta(1-\frac{\lambda}{n})^{nz-1}}+ e^{-\theta(1-\frac{\lambda}{n})^{nz-2}}\right) \nonumber \\
  &=&  2\frac{\lambda^2 \theta}{n^2}e^{-\lambda z - \theta e^{-\lambda z}}(\theta e^{-\lambda z}-1) - k\left( \frac{n}{n-\lambda} \right)  \nonumber 
\end{eqnarray}
where, with $a = \theta \left(1-\frac{\lambda}{n} \right)^{nz} $,
\begin{align*}
     k(b) = e^{-\frac{a}{b}} - e^{-a} - e^{-ab} + e^{-ab^2} = 2 (b-1)^2  ae^{-a} (a-1) + R_1 
\end{align*} 
and $R_1$ is the remainder term from Taylor expansion for $k(b)$ around 1;
\begin{align*}
    R_1 & = \frac{1}{3} (b-1)^3 a \left(a^2e^{-a\xi} + \frac{a^2 e^{-\frac{a}{\xi}}}{\xi^6} -\frac{6a e^{-\frac{a}{\xi}}}{\xi^5} + \frac{6 e^{-\frac{a}{\xi}}}{\xi^4} + 12 a \xi e^{-a\xi^2} -8a^2\xi^3 e^{-a\xi^2} \right). 
    \end{align*}
for some  $1 < \xi \le b = \frac{n}{n-\lambda}$. To bound $R_1$ we use that $1<\xi \le \frac{n}{n-\lambda}$ so that 
$e^{-a \xi} \le e^{-a}$ and $e^{-a \xi^2} \le e^{-a};$ also $e^{-\frac{a}{\xi}} \le e^{-a \frac{n-\lambda}{n}}.$
Hence with the crude bounds $a \le \theta,$ and $e^{-a} \le 1,$
\begin{align*}
   | R_1  | &\le  \frac{1}{3} (b-1)^3 a \left(a^2
    e^{-a} + \left( \frac{n-\lambda}{n} \right)^6 a^2 e^{-a \frac{n-\lambda}{n}}  + \left( \frac{n-\lambda}{n} \right)^5 6a e^{-a \frac{n-\lambda}{n}} \right.\\
     & \left.   +  \left( \frac{n-\lambda}{n} \right)^4 6 e^{-a \frac{n-\lambda}{n}} + 12 a\left(\frac{n}{n-\lambda}\right) e^{-a} + 8a^2 \left(\frac{n}{n-\lambda}\right)^3 e^{-a}  \right) \\
     & \le \frac{1}{3} (b-1)^3 a^2 e^{-a} \left(\theta +  \theta e^{\theta \frac{\lambda}{n}}  +  6 e^{\theta \frac{\lambda}{n}} +   6 a^{-1} e^{\theta \frac{\lambda}{n}} + 12 \left(\frac{n}{n-\lambda}\right)  + 8\theta \left(\frac{n}{n-\lambda}\right)^3  \right).
    \end{align*}
Substituting the expressions for $a$ and $b$,
\begin{align*}
     |R_1| & \le  \frac{1}{3} \left(\frac{\lambda}{n-\lambda}\right)^3 \theta^2 \left(1-\frac{\lambda}{n}\right)^{2nz} e^{-\theta \left(1-\frac{\lambda}{n}\right)^{nz}} \bigg[\theta + \theta e^{\frac{\lambda \theta}{n}} + 6 e^{\frac{\lambda \theta}{n}}  \\
    & \quad \quad \quad + 6 e^{\frac{\lambda \theta}{n}}\frac{1}{\theta}\left(1-\frac{\lambda}{n}\right)^{-nz}+ 12 \left(\frac{n}{n-\lambda}\right) + 8\theta \left(\frac{n}{n-\lambda}\right)^3\bigg]  \\
    & \le \frac{\theta^2\lambda^3}{3n^3} \left(1-\frac{\lambda}{n}\right)^{-3}   e^{-2\lambda z} e^{-\theta \left(1-\frac{\lambda}{n}\right)^{nz}} \bigg[\theta + \theta e^{\frac{\lambda \theta}{n}} + 6 e^{\frac{\lambda \theta}{n}} + 12 \left(1-\frac{\lambda}{n}\right)^{-1} \\
    & \quad \quad \quad   + 8\theta \left(1-\frac{\lambda}{n}\right)^{-3}\bigg] + \frac{2\theta\lambda^3}{n^3} \left(1-\frac{\lambda}{n}\right)^{-3} e^{\frac{\lambda \theta}{n}} e^{-\lambda z} e^{-\theta \left(1-\frac{\lambda}{n}\right)^{nz}}. 
\end{align*}
Here we used $e^{-\theta \left(1-\frac{\lambda}{n}\right)^{nz}} = e^{\theta \left[ e^{-\lambda z}- \left(1-\frac{\lambda}{n}\right)^{nz}\right]} e^{-\theta e^{-\lambda z}}.$ Hence, with \eqref{exponent_new},
\begin{eqnarray} \label{upperboundR}
    |R_1| &\le&  \frac{\theta^2\lambda^3}{3n^3}e^{{\frac{\theta \lambda}{n e}}} \left(1-\frac{\lambda}{n}\right)^{-3}   \bigg[\theta + \theta e^{\frac{\lambda \theta}{n}} + 6 e^{\frac{\lambda \theta}{n}} + 12 \left(1-\frac{\lambda}{n}\right)^{-1}  \\
    && + 8\theta \left(1-\frac{\lambda}{n}\right)^{-3}\bigg] e^{-2\lambda z-\theta e^{-\lambda z}} + 2\frac{\theta\lambda^3}{ n^3} \left(1-\frac{\lambda}{n}\right)^{-3}e^{ {\frac{\theta\lambda}{n e} (e+1)}} e^{-\lambda z-\theta e^{-\lambda z}
    }. \nonumber 
\end{eqnarray}
Thus,
\begin{eqnarray}
    \lefteqn{\bigg| 2\frac{\lambda^2 \theta}{n^2}e^{-\lambda z - \theta e^{-\lambda z}}(\theta e^{-\lambda z}-1) } \nonumber \\
    && -   (e^{-\theta(1-\frac{\lambda}{n})^{nz+1}}- e^{-\theta(1-\frac{\lambda}{n})^{nz}} - e^{-\theta(1-\frac{\lambda}{n})^{nz-1}}+ e^{-\theta(1-\frac{\lambda}{n})^{nz-2}})\bigg|\nonumber \\
    &\le& \bigg| 2\frac{\lambda^2 \theta}{n^2}e^{-\lambda z - \theta e^{-\lambda z}}(\theta e^{-\lambda z}-1) \bigg[1-\left(1-\frac{\lambda}{n}\right)^{-2} \bigg] \bigg| + |R_1| + |R_2| \label{diffpartii}
\end{eqnarray}
with 
\begin{align*}
   R_2 = & 2\left(\frac{\lambda}{n-\lambda}\right)^2 \bigg\{\theta^2 \bigg[\left(1-\frac{\lambda}{n}\right)^{2nz} e^{-\theta \left(1-\frac{\lambda}{n}\right)^{nz}} -e^{-2\lambda z-\theta e^{-\lambda z}} \bigg] \\
   & -  \theta \bigg[\left(1-\frac{\lambda}{n}\right)^{nz} e^{-\theta \left(1-\frac{\lambda}{n}\right)^{nz}} - e^{-\lambda z-\theta e^{-\lambda z}} \bigg] \bigg\}.
\end{align*}

Using \eqref{new_expon} we bound
\begin{align}
  |R_2| & \le 4\frac{ \theta^2 \lambda^4}{n^3} \left( 1- \frac{\lambda}{n}\right)^{-2} z e^{{\frac{\theta \lambda}{n e}}}  e^{-2\lambda z -\theta e^{-\lambda z}} +   2\frac{\theta^3 \lambda^3}{n^3 {e}} \left( 1- \frac{\lambda}{n}\right)^{-2}e^{{\frac{\theta\lambda}{n e}}}  e^{-2 \lambda z -\theta e^{-\lambda z}} \nonumber \\ & + 2\frac{ \theta \lambda^4 }{n^3}\left( 1- \frac{\lambda}{n}\right)^{-2} z e^{{\frac{\theta \lambda}{n e}}} e^{-\lambda z -\theta e^{-\lambda z}} +  2\frac{\theta^2 \lambda^3}{n^3 {e}} \left( 1- \frac{\lambda}{n}\right)^{-2} e^{{\frac{\theta\lambda}{n e}}} e^{- \lambda z -\theta e^{-\lambda z}}. \label{upperboundR2}
\end{align}
Combining \eqref{upperboundR}, \eqref{diffpartii} and \eqref{upperboundR2} and simplifying gives the bound 
\begin{align*}
 &{\bigg|2\frac{\lambda^2 \theta}{n^2}e^{-\lambda z - \theta e^{-\lambda z}}(\theta e^{-\lambda z}-1)} \nonumber \\
    &\quad \quad - \left(e^{-\theta(1-\frac{\lambda}{n})^{nz+1}}- e^{-\theta(1-\frac{\lambda}{n})^{nz}} - e^{-\theta(1-\frac{\lambda}{n})^{nz-1}}+ e^{-\theta(1-\frac{\lambda}{n})^{nz-2}}\right)\bigg|\nonumber \\ \label{diff} 
    & \le \frac{2\lambda}{n} \max(1,\theta) \left\{ \left(1-\frac{\lambda}{n}\right)^{-2} -1\right\} c(z) + \frac{\theta\lambda^2}{n^2} \left(1-\frac{\lambda}{n}\right)^{-2}  e^{{\frac{\theta\lambda}{n e}}} \bigg\{ \frac{1}{3} \left(1-\frac{\lambda}{n}\right)^{-1} \nonumber \\
    &  \left[\theta + \theta e^{\frac{\theta\lambda}{n}} + 6 e^{\frac{\theta\lambda}{n}} + 12 \left(1-\frac{\lambda}{n}\right)^{-1} + 8\theta \left(1-\frac{\lambda}{n}\right)^{-3}\right]   + 2\left({\frac{\theta}{e}}  + 2\lambda z \right)\bigg\} e^{-\lambda z}c(z) \nonumber \\ 
    &+ \frac{2\lambda^2}{n^2}\left( 1- \frac{\lambda}{n}\right)^{-2}e^{{\frac{\theta\lambda}{n e}}}\left[ \left(1-\frac{\lambda}{n}\right)^{-1}e^{\frac{\theta \lambda}{n}}  + \lambda z + {\frac{\theta}{e}} \right] c(z).
\end{align*}
Multiplying the above inequality by $n\left|g\left(z-\frac1n\right)\right|$ gives \eqref{appendix_A}. $\hfill \Box$

\end{document}